\definecolor{labelkey}{rgb}{0.6,0,1}
\theoremstyle{plain}
\newtheorem{theorem}{Theorem}[section]
\newtheorem{lemma}[theorem]{Lemma}
\newtheorem{hypothesis}[theorem]{Hypothesis}
\theoremstyle{definition}
\newtheorem{definition}[theorem]{Definition}
\def\bhyp#1{\begin{equation}\label{#1}\begin{array}{c}}
\def\ehyp{\end{array}\end{equation}}
\newcounter{cst}
\theoremstyle{remark}
\newtheorem{remark}[theorem]{Remark}
\numberwithin{equation}{section}
\numberwithin{figure}{section}
\newcommand{\RR}{{\mathbb R}}
\newcommand{\NN}{{\mathbb N}}
\def\O{\Omega}
\def\dsp{\displaystyle}
\def\disc{{\mathcal D}}
\def\mesh{{\mathcal M}}
\newcommand{\polyd}{{\mathcal T}}
\def\edges{{\mathcal E}}
\def\edge{\sigma}
\newcommand{\edgesext}{{{\edges}_{\rm ext}}} 
\def\dr{\partial}
\newcommand{\centeredge}{\overline{x}_\edge} 
\newcommand{\cK}{{\mathcal K}}
\DeclareMathOperator*{\argminB}{argmin}
\newcommand{\x}{\pmb{x}}
\newif\ifcorr\corrtrue
\definecolor{violet}{rgb}{0.580,0.,0.827}
\def\bpsi{{\boldsymbol \psi}}
\newcommand{\ud}{\, \mathrm{d}} 
\def\div{\mathop{\rm div}}
\def\ini{\mathop{\rm ini}}
\title{Convergence Analysis For Non Linear System Of Parabolic Variational Inequalities}
\author{Yahya Alnashri}
\address[Yahya Alnashri]{Department of Mathematics, Al-Qunfudah University College, Umm Al-Qura University, Saudi Arabia}
\email{yanashri@uqu.edu.sa}
\subjclass[2010]{35J86, 65N12, 65N15, 76S05}
\keywords{Non linear parabolic variational inequalities, PDEs, gradient discretisation method, gradient schemes, obstacle problem, convergence analysis, non conforming $\mathbb P1$ finite element method.}
\date{\today}
\begin{document}
\newcommand{\subscript}[2]{$#1 _ #2$}

\begin{abstract}
This work aims to provide a comprehensive and unified numerical analysis for non linear system of parabolic variational inequalities (PVIs) subject to Dirichlet boundary condition. This analysis enables us to establish an existence of the exact solution to the considered model and to prove the convergence for the approximate solution and its approximate gradient. Our results are applicable for several conforming and non conforming numerical schemes.
\end{abstract}

\maketitle


\section{Introduction}
Non linear parabolic variational inequalities and PDEs are useful tools to model the coupled biochemical interactions of microbial cells, which are crucial to numerous applications, especially in the medical field and food production \cite{biofilm-app-1,biofilm-app-2}. We consider here a non linear parabolic system consisting of PDEs and variational inequalities,
\begin{subequations}\label{pvi-obs}
\begin{align}
(\partial_t \bar A-\div({\bf D_A}\nabla\bar A)-F(\bar A,\bar B))(\bar A-\chi)=0 &\mbox{\quad in $\Omega\times(0,T)$,} \label{pvi-obs1}\\
\partial_t \bar A-\div({\bf D_A}\nabla\bar A)\leq F(\bar A,\bar B) &\mbox{\quad in $\Omega\times(0,T)$,} \label{pvi-obs2}\\
\bar A \leq \chi &\mbox{\quad in $\Omega\times(0,T)$,} \label{pvi-obs3}
\\
\partial_t \bar B-\div({\bf D_B}\nabla\bar B)=G(\bar A,\bar B) &\mbox{\quad in $\Omega\times(0,T)$,} \label{pvi-obs4}\\
(\bar A,\bar B) = (0,0) &\mbox{\quad on $(\partial\O\times(0,T))^2$,} \label{pvi-obs5}\\
(\bar A(\x,0), \bar B(\x,0))=(A_{\rm ini},B_{\rm ini})&\mbox{\quad in $(\Omega\times\{0\})^2$}\label{pvi-obs6}.
\end{align}
\end{subequations}

Numerical approximation in parabolic systems of inequalities and generalisation of inequalities have received considerable attention in the research literature. \cite{biofilm-28} obtains the error estimate of second order in $L^\infty(L^2)$ for linear approximation, respect to space and time with a strong regularity on the solutions, such as $\dr_t\bar B\in L^2(0,T,L^2(\O))$. \cite{biofilm-14} analyses Inequality \eqref{pvi-obs1} in which $F=0$ and ${\bf D_A}$ is constant. The work in \cite{biofilm-4} considers a model without the barrier and provides $\mathcal O(h)$ order of convergence in $L^\infty(L^2)$--norm. An $L^2$--error estimate is provided in different studies, such as in \cite{biofilm-27-9} by using finite difference in time. \cite{biofilm-27} deals with parabolic variational inequalities with non linear source term and derives the convergence rate of the finite element method in space respect to $L^\infty$--norm. It also shows that the general finite difference gives $\mathcal O(h)$ in $L^\infty(L^2)$--norm under strong Hypothesis on data.

However, there is a lack in studies investigating the full convergence analysis of numerical schemes for the model \eqref{pvi-obs} since the coupled nonlinearity of the system and the constraint (the inequality) in the model comprise the primary theoretical challenge. It appears that considerable research is still required, beginning with convergence analysis and testing other varieties of scheme outside conforming methods. Rather than undertaking individual research for every numerical scheme, this work utilises a gradient discretisation method (GDM) to afford a unified and full convergence analysis of numerical methods for \eqref{pvi-obs} under the natural Hypothesis on data. The GDM is a generic framework to unify the numerical analysis for diffusion partial differential equations and their corresponding problems. Due to the variety of choice of the discrete elements in the GDM, a series of conforming and non conforming numerical schemes can be included in the GDM, se \cite{31,32,33,34,35,36,37,38} for more details.

\par The outline of this paper is as follows. Section \ref{sec-contin} is devoted to write the model \eqref{pvi-obs} in an equivalent weak sense. Section \ref{sec-disc} defines the discrete space and functions followed by the gradient scheme to our model in weak sense. Section \ref{sec-main} provides the convergence results, Theorem \ref{theorem-conver-rm}, which is proved by following the compactness technique under classical Hypothesis on continuous model data. Finally, as an example, we present in Section \ref{sec-ex} the non conforming $\mathbb P1$ finite element scheme, that has been not yet applied to the non linear model \eqref{pvi-obs}.

\section{Continuous Setting}\label{sec-contin}

\begin{hypothesis}\label{assm-obs}
we assume the following:
\begin{enumerate}
\item $\O \subset \RR^d\; (d\geq 1)$ is a bounded connected open set, and $T>0$,
\item ${\bf D_A}, {\bf D_B}: \O \to \mathbb M_d(\RR)$ are measurable functions (where $\mathbb M_d(\RR)$ consists of $d \times d$ matrices) and there exists $ d_1, \;d_2 >0$ such that for a.e. $\x \in \O$, ${\bf D_A}(\x)$ and ${\bf D_B}(\x)$ are symmetric with eigenvalues in $[d_1, d_2]$,
\item the constraint function $\chi$ is in $H^1(\O)\cap C(\overline\O)$, such that $\chi\geq 0$ on the domain boundary $\dr\O$,  
\item $F$ and $G$ are smooth and Lipschitz functions on $\RR^2$ with Lipschitz constants $M_1$ and $M_2$, respectively, in which $M=\max(M_1,M_2)$,
\item $A_{\rm ini} \in W^{2,\infty}(\O)\cap \cK$, where $\cK:=\{ \varphi \in H_0^1(\O)\; : \; \varphi \leq \chi(t)\; \mbox{ in } \O \}$ and $B_{\rm ini} \in W^{2,\infty}(\O)$.
\end{enumerate}
\end{hypothesis}

With the above Hypothesis, we consider the time dependent closed convex set
\[
\mathbb K:=\{ \varphi\in L^2(0,T;H_0^1(\O)):\; \varphi(t)\in \cK \mbox{ for a.e. } t\in (0,T)\}.
\]
It is clear to see that the time dependent $\mathbb K$ contains at least the constant in time function $t \mapsto \chi^-:=\min(0,\chi)$.

\begin{definition}[Weak formulation]  
Under Hypothesis \ref{assm-obs}, we say that $(\bar A,\bar B)$ is a weak solution of  \eqref{pvi-obs1}--\eqref{pvi-obs6} if the following properties and relations hold:
\begin{enumerate}
\item $\bar A\in \mathbb{K}\cap C^0([0,T];L^2(\O)),\; \bar A(\cdot,0)=A_{\rm ini},\; \dr_t \bar A\in L^2(0,T;L^2(\O))$,
\item $\bar B\in C^0([0,T];L^2(\O)),\; \bar B(\cdot,0)=B_{\rm ini},\; \dr_t \bar B\in L^2(0,T;H^{-1}(\O))$, 
\item for all $\varphi \in \mathbb{K}$, and for all $\psi \in L^2(0,T;H_0^1(\O))$,
\begin{subequations}\label{pvi-obs-weak}
\begin{equation}\label{pvi-obs-w1}
\begin{aligned}
\dsp\int_0^T\dsp\int_\O \partial_t \bar A(\x,t) (A(\x,t)&-\varphi(\x,t)) \ud \x \ud t
+\dsp\int_0^T\int_\O {\bf D_A}\nabla\bar A \cdot \nabla(\bar A-\varphi)(\x,t)\ud \x \ud t\\
&\leq\dsp\int_0^T \int_\O F(\bar A,\bar B)(\bar A(\x,t)-\varphi(\x,t))\ud \x \ud t,\mbox{ and }
\end{aligned}
\end{equation}
\begin{equation}\label{pvi-obs-w2}
\left.
\begin{aligned}
&\dsp\int_0^T \langle \partial_t\bar B(\x,t),\psi(\x,t) \rangle_{H^{-1},H^1}\ud t
+\dsp\int_0^T\int_\O {\bf D_B}(\x)\nabla \bar B(\x,t) \cdot \nabla \psi(\x,t)\ud \x \ud t\\
{}&\quad\quad\quad\quad\quad= \dsp\int_0^T\dsp\int_\O G(\bar A(\x,t),\bar B(\x,t))\psi(\x,t) \ud \x \ud t,
\end{aligned}
\right.
\end{equation}
\end{subequations}
\end{enumerate}
\end{definition}
where $\langle \cdot,\cdot \rangle_{H^{-1},H^1}$ is the duality product between $H^{-1}(\O)$ and $H^1(\O)$.


\section{Discrete Setting}\label{sec-disc}
We begin with defining the discrete space and operators. These discrete elements are slightly different from the ones defined in \cite{38,37}, in particular, $\chi_\disc$, $I_\disc$, and $J_\disc$ are introduced to deal with the non constant barrier $\chi$ and the initial solutions $A_{\ini}$ and $B_{\ini}$.  
 
\begin{definition}[{\bf GD for time dependent for obstacle problem}]\label{def-gd-pvi-obs}
Let $\O$ be an open domain of $\RR^d$ ($d\geq 1$) and $T>0$. A gradient discretisation $\disc$ is defined by $\disc=(X_{\disc,0},\Pi_\disc,\nabla_\disc,\chi_\disc, I_\disc, J_\disc, (t^{(n)})_{n=0,...,N})$, where:
\begin{enumerate}
\item The discrete set $X_{\disc,0}$ is a finite-dimensional vector space on $\RR$, taking into account the homogenous Dirichlet boundary condition \eqref{pvi-obs5}.
\item The linear operator $\Pi_\disc : X_{\disc,0} \to L^2(\O)$ is the reconstruction of the approximate function.
\item The linear operator $\nabla_\disc : X_{\disc,0} \to L^2(\O)^d$ is the reconstruction of the gradient of the function, and must be chosen so that $\| \nabla_\disc\cdot \|_{L^2(\O)^d}$ is a norm on $X_{\disc,0}$,
\item $\chi_\disc\in L^2(\O)$ is an approximation of the barrier $\chi$, 
\item $I_\disc: W^{2,\infty}(\O)\cap\cK \to \cK_\disc:=\{ \varphi\in X_{\disc,0} :\; \Pi_\disc \varphi\leq \chi_\disc, \mbox{ in } \O \}$ is a linear and continuous interpolation operator for the initial solution $A_{\ini}$,
\item $J_\disc: W^{2,\infty}(\O) \to X_{\disc,0}$ is a linear and continuous interpolation operator for the solution $B_{\ini}$, 
\item $t^{(0)}=0<t^{(1)}<....<t^{(N)}=T$.
\end{enumerate}
\end{definition}

\begin{remark}
For a general obstacle $\chi$, most of numerical methods fail to approximate the solution $\bar A$ by elements inside the set $\cK_\disc$. For an example, in the $\mathbb P 1$ finite element method, we consider only the values of $\bar A$ at the vertices of the mesh, which only guarantee that these values satisfy the barrier condition \eqref{pvi-obs3} only at these vertices, not necessarily at any point in $\O$. We define here the set $\cK_\disc$ based on the approximate barrier $\psi_\disc$ to be able to construct an interpolant that belongs to $\cK_\disc$. However, there is no need to use an approximate barrier, if the barrier $\chi$ is assumed to be constant.
\end{remark}

For any $\varphi=(\varphi^{(n)})_{n=0,...,N} \in X_{\disc,0}^{N+1}$, we define the notations as space time functions on as follows: the reconstructed function $\Pi_\disc \varphi: \O\times[0,T]\to \RR$ and the reconstructed gradient $\nabla_\disc \varphi: \O\times[0,T]\to \RR^d$ are given by:
\begin{equation*}
\begin{split}
&\Pi_\disc \varphi(\cdot,0)=\Pi_\disc \varphi^{(0)} \mbox{ and } \forall n=0,...,N-1,\; \forall t\in(t^{(n)},t^{(n+1)}], \forall \x\in \O,\\
&\Pi_\disc \varphi(\x,t)=\Pi_\disc \varphi^{(n+1)}(\x) \mbox{ and }\nabla_\disc \varphi(\x,t)=\nabla_\disc \varphi^{(n+1)}(\x).
\end{split}
\end{equation*}
Setting $\delta t^{(n+\frac{1}{2})}=t^{(n+1)}-t^{(n)}$, for $n=0,...,N-1$, and $\delta t_\disc=\max_{n=0,...,N-1}\delta t^{(n+\frac{1}{2})}$, the discrete derivative $\delta_\disc \varphi \in L^\infty(0,T;L^2(\O))$ of $\varphi\in X_{\disc,\Gamma_2}^{N+1}$ is defined by  
\begin{equation*}
\delta_\disc \varphi(t)=\delta_\disc^{(n+\frac{1}{2})}\varphi:=\frac{\Pi_\disc \varphi^{(n+1)}-\Pi_\disc \varphi^{(n)}}{\delta t^{(n+\frac{1}{2})}}, \mbox{ $\forall n=0,...,N-1$ and $t\in (t^{(n)},t^{(n+1)}]$}.
\end{equation*}

In order to construct a good approximate scheme, we require four properties; coercivity, consistency, limit--conformity, and the compactness. The first three ones respectively connect to the Poincar\'e inequality, the interpolation error and the Stoke formula. The compactness property enables us to deal with non linearity caused by the reaction terms $F$ and $G$.

\begin{definition}[Coercivity]\label{def:corv-rm}
If $\disc$ is a gradient discretisation, set
\[
C_\disc = \dsp\max_{\varphi \in X_{\disc,0}\setminus\{0\}}\frac{\|\Pi_\disc \varphi\|_{L^2(\O)}}{\|\nabla_\disc \varphi\|_{L^2(\O)^d}}.
\]
A sequence $(\disc_m)_{m \in \NN}$ of gradient discretisation is \emph{coercive} if $(C_{\disc_m})_{m\in\NN}$ remains bounded.
\end{definition}

\begin{definition}[Consistency]\label{def:cons-rm}
If $\disc$ is a gradient discretisation, let $S_\disc:\cK \to [0,\infty)$ and $\widetilde{S}_\disc:H_0^1(\O) \to [0,\infty)$ be defined by
\begin{equation}\label{consist-1}
\forall w \in \cK, \; S_\disc(w)=\min_{\varphi\in \cK_\disc} \Big(\| \Pi_\disc \varphi - w \|_{L^2(\O)} 
+ \| \nabla_\disc \varphi - \nabla w \|_{L^2(\O)^{d}}\Big),
\end{equation}
\begin{equation}\label{consist-1}
\forall w \in H_0^1(\O), \; \widetilde S_\disc(w)=\min_{\psi\in X_{\disc,0}} \Big(\| \Pi_\disc \psi - w \|_{L^2(\O)} 
+ \| \nabla_\disc w - \nabla \psi \|_{L^2(\O)^{d}}\Big).
\end{equation}
A sequence $(\mathcal{D}_{m})_{m \in \mathbb{N}}$ of gradient discretisation is \emph{consistent} if, as $m \to \infty$, 
\begin{itemize}
\item for all $w \in \cK$, $S_{\disc_m}(w) \to 0$,
\item for all $w \in H_0^1(\O)$, $\widetilde S_{\disc_m}(w) \to 0$, 
\item for all $w \in W^{2,\infty}(\O)\cap\cK$, $\Pi_{\disc_m}I_{\disc_m}w \to w$ strongly in $L^2(\O)$,
\item for all $w \in W^{2,\infty}(\O)$, $\Pi_{\disc_m}J_{\disc_m}w \to w$ strongly in $L^2(\O)$,
\item $(\| \nabla_{\disc_m}I_{\disc_m} A_{\rm ini}\|_{L^2(\O)^d})_{m\in \NN}$ is bounded,
\item $\delta t_{\disc_m} \to 0$.
\end{itemize}
\end{definition}

\begin{definition}[Limit--conformity]\label{def:lconf-rm}
If $\disc$ is a gradient discretisation, let $W_{\mathcal{D}} : H_{\rm div}(\O):=\{\bpsi \in L^2(\O)^d\;:\; {\rm div}\bpsi \in L^2(\O)\} \to [0, +\infty)$ be defined by
\begin{equation}\label{long-rm}
W_{\mathcal{D}}(\bpsi)
 = \sup_{\varphi\in X_{\disc,0}\setminus \{0\}}\frac{\Big|\dsp\int_{\O}(\nabla_\disc \varphi\cdot \bpsi + \Pi_\disc \varphi \div (\bpsi)) \ud \x \Big|}{\| \nabla_\disc \varphi\|_{L^2(\O)^d} },
\end{equation}
 
A sequence $(\disc_m)_{m\in \NN}$ of gradient discretisation is \emph{limit-conforming} if for all $\bpsi \in H_{\rm div}(\O)$, $W_{\disc_m}(\bpsi) \to 0$, as $m \to \infty$.
\end{definition}

\begin{definition}[Compactness]\label{def:compact}
A sequence of gradient discretisation $(\disc_m)_{m\in\NN}$ is \emph{compact} if for any sequence $(\varphi_m )_{m\in\NN} \in X_{\disc_m,0}$, such that $(\| \nabla_{\disc_m}\varphi_m \|_{L^2(\O)})_{m\in \NN}$ is bounded, the sequence $(\Pi_{\disc_m}\varphi_m )_{m\in \NN}$ is relatively compact in $L^2(\O)$.
\end{definition}

\begin{definition}[Gradient scheme]\label{def-gs-obs} Find sequences $A=( (A^{(n)})_{n=0,...,N}, B=(B^{(n)})_{n=0,...,N} ) \subset \cK_\disc \times X_{\disc,0}$, such that $(A^{(0)},B^{(0)})=( I_\disc A_{\rm ini}, J_\disc B_{\rm ini} )\in \cK_\disc \times X_{\disc,0}$, for all $n=0,...,N-1$, for all $\varphi\in \cK_\disc$, and for all $\psi\in X_{\disc,0}$, 
\begin{subequations}\label{gs-pvi}
\begin{equation}\label{gs-pvi-obs1}
\begin{array}{ll}
\dsp\int_\O \delta_\disc^{(n+\frac{1}{2})}A(\x)\, \Pi_\disc(A^{(n+1)}(\x)-\varphi(\x)) \ud \x\\
+\dsp\int_\O {\bf D_A}(\x)\nabla_\disc A^{(n+1)}(\x)\cdot\nabla_\disc(A^{(n+1)}(\x)-\varphi(\x)) \ud \x\\
\leq \dsp\int_\O F(\Pi_\disc A^{(n+1)}, \Pi_\disc B^{(n+1)})\Pi_\disc(A^{(n+1)}(\x)-\varphi(\x)) \ud \x \ud t, \mbox{ and } 
\end{array}
\end{equation}
\begin{equation}\label{gs-pvi-obs2}
\begin{array}{ll}
\dsp\int_\O \delta_\disc^{(n+\frac{1}{2})}B(\x)\, \Pi_\disc \psi(\x) \ud \x
+\dsp\int_\O {\bf D_B}(\x)\nabla_\disc B^{(n+1)}(\x)\cdot\nabla_\disc \psi(\x) \ud \x\\
= \dsp\int_\O G(\Pi_\disc A^{(n+1)}, \Pi_\disc B^{(n+1)})\Pi_\disc\psi(\x) \ud \x \ud t.
\end{array}
\end{equation}
\end{subequations}
\end{definition}


\section{Main Results}\label{sec-main}
Let the time interval $[0,T]$ be divided into $\ell_\kappa$ intervals of length $\kappa$, where $\kappa$ tends to zero as $\ell_\kappa \to \infty$. Let ${\bf 1}_{I_i}$ be the characteristic function of $I_i=[i\kappa,(i+1)\kappa)$, $i=0,...,\ell_\kappa$. We define a set of piecewise-constant in time functions by
\begin{equation}\label{ch5-dense-set}
\mathbb L_\kappa=\left\{ w_\kappa(\x,t)=\sum_{i=1}^{\ell_\kappa}{\bf 1}_{I_i}(t)\varphi_i(\x)\; :\; \varphi \in C_0^2(\overline\O) \mbox{ and $\varphi \leq \chi$ in $\O$, a.e.} \right\}.
\end{equation}

\begin{lemma}\label{ch5-lem-intp}
For $T>0$, let $(\disc)_{m\in\NN}$ be a sequence of gradient discretisation, that is consistent. Let $\bar w_\kappa\in \mathbb L_\kappa$ be a piecewise constant in time function, where $\mathbb L_\kappa$ is the set defined by \eqref{ch5-dense-set}. Then there exists a sequence $(w_m)_{m\in\NN}$ such that $w_m=(w_m^{(n)})_{n=0,...,N_m} \in \cK_{\disc_m}^{N_m+1}$ for all $m\in\NN$, and, as $m\to \infty$,
\begin{subequations}
\begin{align}
&\Pi_{\disc_m}w_m \to \bar w_\kappa \mbox{ strongly in }  L^2(\O\times (0,T)),\label{ch5-pvi-wekinterp1}\\
&\nabla_{\disc_m}w_m \to \nabla\bar w_\kappa \mbox{ strongly in }  L^2(\O\times (0,T))^d.\label{ch5-pvi-wekinterp2}
\end{align}
\end{subequations}
\end{lemma}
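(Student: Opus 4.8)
The plan is to exploit the consistency of $(\disc_m)_{m\in\NN}$ to approximate, slab by slab in time, each spatial profile appearing in $\bar w_\kappa$, and then to reassemble these spatial interpolants into a time sequence indexed by the discrete grid $(t^{(n)})_{n=0,\dots,N_m}$. Write $\bar w_\kappa(\x,t)=\sum_{i=1}^{\ell_\kappa}\mathbf 1_{I_i}(t)\varphi_i(\x)$ with each $\varphi_i\in C_0^2(\overline\O)$ and $\varphi_i\le\chi$ in $\O$. The first observation is that every $\varphi_i$ lies in $\cK$: indeed $C_0^2(\overline\O)\subset H_0^1(\O)$ and $\varphi_i\le\chi$, which is exactly the membership condition defining $\cK$.

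Since each $\varphi_i\in\cK$, consistency gives $S_{\disc_m}(\varphi_i)\to 0$ as $m\to\infty$. For each fixed $i\in\{1,\dots,\ell_\kappa\}$ I would pick a (near-)minimiser $v_{m,i}\in\cK_{\disc_m}$ in the definition of $S_{\disc_m}(\varphi_i)$, so that
\[
\eta_{m,i}:=\|\Pi_{\disc_m}v_{m,i}-\varphi_i\|_{L^2(\O)}+\|\nabla_{\disc_m}v_{m,i}-\nabla\varphi_i\|_{L^2(\O)^d}\longrightarrow 0.
\]
Because $\kappa$ is fixed, there are only finitely many indices $i$, so $\eta_m:=\max_{1\le i\le\ell_\kappa}\eta_{m,i}\to 0$ as well; in particular $\sup_m\|\Pi_{\disc_m}v_{m,i}\|_{L^2(\O)}$ is bounded uniformly in $i$. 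I would then define the discrete time sequence by assigning to each node its slab profile: for $n=1,\dots,N_m$ set $w_m^{(n)}=v_{m,i(n)}$, where $i(n)$ is the index with $t^{(n)}\in I_{i(n)}$, and set $w_m^{(0)}=v_{m,i(1)}$. Each $w_m^{(n)}\in\cK_{\disc_m}$, hence $w_m\in\cK_{\disc_m}^{N_m+1}$, which is the required membership; crucially this also preserves the discrete obstacle constraint $\Pi_{\disc_m}w_m^{(n)}\le\chi_{\disc_m}$.

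For the convergences \eqref{ch5-pvi-wekinterp1}--\eqref{ch5-pvi-wekinterp2} I would estimate the space--time $L^2$ errors by decomposing $(0,T)$ over the discrete subintervals $(t^{(n-1)},t^{(n)}]$, on each of which $\Pi_{\disc_m}w_m$ is the time-constant function $\Pi_{\disc_m}v_{m,i(n)}$. Split the subintervals into two families: those contained in a single slab $I_i$, and those that straddle a slab boundary. On a subinterval of the first kind one has $\bar w_\kappa(\cdot,t)=\varphi_{i(n)}$ throughout, so its contribution to $\|\Pi_{\disc_m}w_m-\bar w_\kappa\|_{L^2(\O\times(0,T))}^2$ is at most $\delta t^{(n-\frac12)}\,\eta_m^2$, and summing over all such subintervals yields at most $T\,\eta_m^2\to 0$; the gradient term is handled identically using $\nabla\bar w_\kappa(\cdot,t)=\nabla\varphi_{i(n)}$. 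The subintervals of the second kind have total length at most $\ell_\kappa\,\delta t_{\disc_m}$, and on them the integrand is bounded by $2\big(\sup_{i,m}\|\Pi_{\disc_m}v_{m,i}\|_{L^2(\O)}^2+\max_i\|\varphi_i\|_{L^2(\O)}^2\big)$, so their total contribution is $O(\ell_\kappa\,\delta t_{\disc_m})\to 0$ since $\delta t_{\disc_m}\to 0$ by consistency.

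The main obstacle is precisely this mismatch between the temporal grid $(t^{(n)})$ and the fixed slab partition $\{I_i\}$: a discrete subinterval may overlap two consecutive slabs across which $\bar w_\kappa$ jumps, and there the piecewise-constant discrete reconstruction cannot track $\bar w_\kappa$. The resolution is quantitative: with $\kappa$ held fixed the number of slab boundaries is bounded, so the straddling subintervals carry vanishing total time-measure as the mesh is refined, and the two properties $\delta t_{\disc_m}\to 0$ and the uniform-in-$i$ consistency bound $\eta_m\to 0$ together force both error terms to zero.
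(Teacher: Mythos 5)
Your proposal is correct and follows essentially the same route as the paper: approximate each slab profile $\varphi_i$ by a (near-)minimiser in $\cK_{\disc_m}$ of the consistency defect $S_{\disc_m}(\varphi_i)$, assign to each discrete time node the interpolant of the slab containing it, and then separate the error into a spatial part controlled by consistency and a temporal part supported on the $O(\ell_\kappa)$ subintervals straddling slab boundaries, whose total measure is $O(\ell_\kappa\,\delta t_{\disc_m})\to 0$. The paper organises the same two error sources via the identity $ab-cd=(a-c)b+c(b-d)$ and the $L^2(0,T)$ norm of the indicator mismatch $\xi_m^i-\mathbf 1_{I_i}$, but the substance is identical, including the treatment of the gradient by the same argument.
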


\begin{proof}[{\bf Proof}]
Write $\bar w_\kappa(\x,t)=\sum_{i=1}^{\ell_\kappa} {\bf 1}_{I_i}(t)\phi_i(\x)$ such that $\phi_i \in C_0^\infty(\overline \O)\cap \cK$. Let $s\in (0,T)$ and choose $n:=n(s)$ such that $s\in (t^{(n(s))},t^{(n(s)+1)}]$. Let $w_m\in X_{\disc_m,0}$ be defined by $w_m=\sum_{i=1}^{\ell_\kappa}{\bf 1}_{I_i}(t^{(n(s)+1)})P_{\disc_m}\phi_i$, where
\begin{equation}\label{ch5-cons-funct}
P_{\disc_m}(\phi)  ={ \displaystyle \argminB_{\omega \in \cK_{\disc_m}
}\left(\| \Pi_{\disc_m} \omega - \phi \|_{L^2(\Omega)} + \| \nabla_{\disc_m} \omega - \nabla \phi \|_{L^2(\Omega)^{d}}\right)}.
\end{equation}
For $i=1,...,\ell_\kappa$, we define ${\xi}_m^i:(0,T) \to \RR$ by ${\xi}_m^i(s)={\bf 1}_{I_i}(t^{(n(s)+1)})$ for $s\in (0,T)$. Using the relation $ab-cd=(a-c)b+c(b-d)$, we obtain, for all $s\in (0,T)$ and a.e. $\x\in\O$,
\begin{equation*}
\begin{aligned}
(\Pi_{\disc_m}w_m-\bar w_\kappa)(\x,s)&= \sum_{i=1}^{\ell_\kappa} \left({\xi}_m^i(s)-{\bf 1}_{I_i}(s) \right)\Pi_{\disc_m}P_{\disc_m}\phi_i(\x)\\
&+\sum_{i=1}^{\ell_\kappa} {\bf 1}_{I_i}(s) \left( \Pi_{\disc_m}P_{\disc_m}\phi_i -\phi_i \right)(\x).
\end{aligned}
\end{equation*}
An application of the definition of $S_{\disc_m}$ yields
\begin{equation}\label{ch5-eq-interp-lemma-proof}
\begin{aligned}
\| \Pi_{\disc_m}w_m-\bar w_\kappa \|_{L^2(\O\times (0,T))}&\leq \sum_{i=1}^{\ell_\kappa}\| {\xi}_m^i(s)-{\bf 1}_{I_i}(s) \|_{L^2(0,T)} \| \Pi_{\disc_m}P_{\disc_m}\phi_i
\|_{L^2(\O)}\\
&+\sum_{i=1}^{\ell_\kappa}\| {\bf 1}_{I_i}(s) \|_{L^2(0,T)} \| \Pi_{\disc_m}P_{\disc_m}\phi_i -\phi_i \|_{L^2(\O)}\\
&\leq \sum_{i=1}^{\ell_\kappa} \| {\xi}_m^i(s)-{\bf 1}_{I_i}(s) \|_{L^2(0,T)} \left( S_{\disc_m}(\phi_i)+\|\phi_i\|_{L^2(\O)} \right)\\
&+C_1\sum_{i=1}^{\ell_\kappa}S_{\disc_m}(\phi_i),
\end{aligned}
\end{equation}
where $C_1=\sum_{i=1}^{\ell_\kappa} \|{\bf 1}_{I_i} \|_{L^2(0,T)}$. From the consistency, one obtains $S_{\disc_m}(\phi_i)\to 0$ as $m\to \infty$, for any $i=0,...,\ell_\kappa$, which implies that the second term on the R.H.S vanishes. In the case in which both $s$, $t^{(n(s)+1)} \in I_i$ or both $s$, $t^{(n(s)+1)} \notin I_i$, the quantity ${\xi}_m^i(s)-{\bf 1}_{I_i}(s)$ equals zero. In the case in which $s\in I_i$ and $t^{(n(s)+1)} \notin I_i$ or $s\notin I_i$ and $t^{(n(s)+1)} \in I_i$, one can deduce (writing $I_i=[a_i,b_i]$ and because $s$ is chosen such that $|s-t^{(n(s)+1)}|\leq \delta t_{\disc_m}$
\begin{align*}
\| {\xi}_m^i(s)-{\bf 1}_{I_i}(s) \|_{L^2(0,T)}^p &\leq {\rm measure}([a_i-\delta t_{\disc_m},a_i+\delta t_{\disc_m}] \cup [b_i-\delta t_{\disc_m}, b_i+\delta t_{\disc_m}])\\
&\leq 4 \delta t_{\disc_m}.
\end{align*}
This shows that the first term on the R.H.S of \eqref{ch5-eq-interp-lemma-proof} tends to zero when $m \to \infty$. Hence, \eqref{ch5-pvi-wekinterp1} is concluded. The proof of \eqref{ch5-pvi-wekinterp2} is obtained by the same reasoning, replacing $\bar w_\kappa$ by $\nabla\bar w_\kappa$ and $\Pi_{\disc_m}w_m$ by $\nabla_{\disc_m}w_m$.     
\end{proof}


\begin{lemma}[{\bf Energy estimates}]
\label{ch5-lem-derv}
Let Hypothesis \ref{assm-obs} hold. If $\disc$ is a gradient discretisation, such that $\delta_\disc < \frac{1}{2M}$, $\cK_\disc$ is a non empty set, and $(A,B)\in\cK_\disc \times X_{\disc,0}$ is a solution of the approximate scheme \eqref{gs-pvi}, then there exists a constant $C_2\geq 0$ only depending on $\O$, $d_1$, $T$, $M$, $C_0:=\max(F({\bf 0}), G({\bf 0}))$, $\| \Pi_{\disc}I_\disc A_{\ini}\|_{L^2(\O)}$, $\| \nabla_{\disc}I_\disc\nabla A_{\ini} \|_{L^2(\O)^d}$, and $\| \Pi_{\disc}J_\disc B_{\ini} \|_{L^2(\O)}$, such that
\begin{equation}\label{est-new}
\begin{aligned}
\|\delta_\disc A \|_{L^2(\O\times(0,T))} 
&+\|\nabla_\disc A \|_{L^\infty(0,T;L^2(\O)^d)}\\
&+\|\Pi_\disc B\|_{L^\infty(0,T;L^2(\O))}+ \| \nabla_\disc B \|_{L^2(\O\times(0,T))^d}\\
&\leq C_2.
\end{aligned}
\end{equation}
\end{lemma}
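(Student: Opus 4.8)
The plan is to obtain all four norms simultaneously by choosing sharp test functions in the two discrete relations \eqref{gs-pvi}, summing over the time levels so that the diffusion and accumulation contributions telescope, and then closing the resulting coupled inequality with a discrete Gronwall argument. First I would test the inequality \eqref{gs-pvi-obs1} with $\varphi=A^{(n)}$; this is legitimate because the scheme produces $A^{(n)}\in\cK_\disc$, and it is exactly the admissibility of the previous iterate that makes the obstacle term drop out. Writing $\Pi_\disc(A^{(n+1)}-A^{(n)})=\delta t^{(n+\frac{1}{2})}\delta_\disc^{(n+\frac{1}{2})}A$ turns the accumulation term into $\delta t^{(n+\frac{1}{2})}\|\delta_\disc^{(n+\frac{1}{2})}A\|_{L^2(\O)}^2$, while the elementary identity $2a\cdot(a-b)=|a|^2-|b|^2+|a-b|^2$, read in the ${\bf D_A}$-weighted inner product $\|\mathbf v\|_{{\bf D_A}}^2:=\int_\O {\bf D_A}\mathbf v\cdot\mathbf v$, converts the diffusion term into the telescoping difference $\tfrac{1}{2}(\|\nabla_\disc A^{(n+1)}\|_{{\bf D_A}}^2-\|\nabla_\disc A^{(n)}\|_{{\bf D_A}}^2)$ once the nonnegative remainder $\tfrac12\|\nabla_\disc(A^{(n+1)}-A^{(n)})\|_{{\bf D_A}}^2$ is discarded. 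In parallel I would test the equation \eqref{gs-pvi-obs2} with $\psi=B^{(n+1)}$; the same identity yields $\tfrac12(\|\Pi_\disc B^{(n+1)}\|_{L^2(\O)}^2-\|\Pi_\disc B^{(n)}\|_{L^2(\O)}^2)$ up to a nonnegative term, and the lower eigenvalue bound gives $\int_\O {\bf D_B}\nabla_\disc B^{(n+1)}\cdot\nabla_\disc B^{(n+1)}\ge d_1\|\nabla_\disc B^{(n+1)}\|_{L^2(\O)}^2$.

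For the right-hand sides I would use the Lipschitz bounds $|F(a,b)|\le C_0+M(|a|+|b|)$ and the analogue for $G$, together with Cauchy--Schwarz and Young's inequality. In the $A$-line the reaction pairs against $\delta_\disc^{(n+\frac{1}{2})}A$, so half of $\delta t^{(n+\frac{1}{2})}\|\delta_\disc^{(n+\frac{1}{2})}A\|_{L^2(\O)}^2$ can be absorbed on the left, with no restriction on the time step. In the $B$-line the reaction produces a term $\delta t^{(n+\frac{1}{2})}M\|\Pi_\disc B^{(n+1)}\|_{L^2(\O)}^2$ that must be set against the un-weighted $\tfrac12\|\Pi_\disc B^{(n+1)}\|_{L^2(\O)}^2$ coming from the accumulation term; this is precisely where the hypothesis $\delta_\disc<\tfrac{1}{2M}$ enters, since it guarantees $1-2M\delta_\disc>0$ and hence a strictly positive coefficient in front of $\|\Pi_\disc B^{(n+1)}\|_{L^2(\O)}^2$.

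The reaction estimates leave the zero-order quantities $\|\Pi_\disc A^{(n+1)}\|_{L^2(\O)}$ and $\|\Pi_\disc B^{(n+1)}\|_{L^2(\O)}$ on the right, and because $C_2$ may not depend on the coercivity constant I would avoid the Poincar\'e inequality and instead write $\Pi_\disc A^{(n+1)}=\Pi_\disc I_\disc A_{\ini}+\sum_{j\le n}\delta t^{(j+\frac{1}{2})}\delta_\disc^{(j+\frac{1}{2})}A$, whence $\|\Pi_\disc A^{(n+1)}\|_{L^2(\O)}\le\|\Pi_\disc I_\disc A_{\ini}\|_{L^2(\O)}+\sqrt T\,\|\delta_\disc A\|_{L^2(\O\times(0,T))}$, expressing this quantity through the controlled derivative norm and an initial datum already listed in $C_2$. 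Summing the two per-step inequalities over $n=0,\dots,k-1$ for an arbitrary $k\le N$, the telescoping leaves $\|\nabla_\disc A^{(k)}\|_{{\bf D_A}}^2$, $\|\Pi_\disc B^{(k)}\|_{L^2(\O)}^2$, the cumulative $\|\delta_\disc A\|_{L^2(\O\times(0,t^{(k)}))}^2$ and $\|\nabla_\disc B\|_{L^2(\O\times(0,t^{(k)}))^d}^2$ on the left, against the initial energies (controlled by $\|\nabla_\disc I_\disc A_{\ini}\|_{L^2(\O)^d}$ and $\|\Pi_\disc J_\disc B_{\ini}\|_{L^2(\O)}$ through the eigenvalue bounds) and a residual $\sum_n\delta t^{(n+\frac{1}{2})}(\|\Pi_\disc A^{(n+1)}\|_{L^2(\O)}^2+\|\Pi_\disc B^{(n+1)}\|_{L^2(\O)}^2)$ on the right. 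Inserting the bound on $\|\Pi_\disc A^{(n+1)}\|_{L^2(\O)}$ and applying a discrete Gronwall inequality in $k$ yields a bound uniform in $k$; taking the maximum over $k$ for the two $L^\infty(0,T;\cdot)$ norms and using $d_1\le{\bf D_A}$ to pass back to the unweighted gradient norm produces \eqref{est-new}.

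The principal obstacle is the genuine two-way coupling of $A$ and $B$ through $F$ and $G$: neither energy estimate closes in isolation, since the source of each relation carries the unknown of the other. The remedy is to add the two summed inequalities and treat the combined energy as a single Gronwall variable, exploiting the small-time-step condition to move the implicit reaction contribution of $B$ to the left. By contrast, the variational-inequality character of the first relation causes no difficulty, because the choice $\varphi=A^{(n)}\in\cK_\disc$ eliminates the obstacle term cleanly and reduces that line to a standard parabolic energy identity.
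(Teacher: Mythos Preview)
Your plan is correct and follows essentially the same route as the paper: test \eqref{gs-pvi-obs1} with $\varphi=A^{(n)}$, test \eqref{gs-pvi-obs2} with $\psi=B^{(n+1)}$, use the identity $(r-s)\cdot r\ge \tfrac12|r|^2-\tfrac12|s|^2$ to telescope, estimate the reaction terms through the Lipschitz bound, Cauchy--Schwarz and Young, and close with a discrete Gronwall argument.

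Two organisational differences are worth pointing out. First, the paper applies Gronwall to the $B$-inequality \emph{alone}, obtaining $\|\Pi_\disc B^{(m)}\|_{L^2(\O)}^2$ bounded by an exponential constant times $\sum_n\delta t^{(n+\frac12)}\|\Pi_\disc A^{(n+1)}\|_{L^2(\O)}^2$, and only then combines this with the $A$-estimate; you instead add the two summed inequalities and run a single Gronwall on the combined energy. Both are legitimate. Second, your device of writing $\Pi_\disc A^{(n+1)}=\Pi_\disc I_\disc A_{\rm ini}+\sum_{j\le n}\delta t^{(j+\frac12)}\delta_\disc^{(j+\frac12)}A$ to control $\|\Pi_\disc A^{(n+1)}\|_{L^2(\O)}$ without the discrete Poincar\'e constant is a cleaner justification of the stated dependencies of $C_2$ than what appears in the paper (whose final displayed inequality is somewhat imprecise on this point). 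A small caveat: once you feed that telescoping bound back into the Gronwall step, the implicit constant in front of $\sum_n\delta t^{(n+\frac12)}S_{n+1}$ picks up a factor of $T$, so the smallness condition on $\delta t_\disc$ you actually use may involve $T$ as well as $M$; this does not affect the validity of the lemma, only the sharpness of the threshold $\tfrac{1}{2M}$ recorded in the hypothesis.
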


\begin{proof}[{\bf Proof}]
We start by taking $\varphi:=A^{(n)}$ (it belongs to $\cK_\disc$) and the function $\psi:=\delta t^{ (n+\frac{1}{2}) }B^{(n+1)}$ in \eqref{gs-pvi} to get
\begin{equation}\label{ch5-nw-eq1}
\begin{aligned}
\delta t^{(n+\frac{1}{2})}\dsp\int_\O |\delta_\disc^{(n+\frac{1}{2})}A|^2 \ud \x
&+\dsp\int_\O {\bf D_A}\nabla_\disc A^{(n+1)}\cdot \nabla_\disc(A^{(n+1)}-A^{(n)}) \ud \x\\
&\quad\leq \delta t^{(n+\frac{1}{2})}\dsp\int_\O F(\Pi_\disc A^{(n+1)},\Pi_\disc B^{(n+1)})\delta_\disc^{(n+\frac{1}{2})}A  \ud \x,
\end{aligned}
\end{equation}
and
\begin{equation}\label{ch5-nw-eq1-1}
\begin{aligned}
\dsp\int_\O \Big(\Pi_\disc B^{(n+1)}(\x)&-\Pi_\disc B^{(n)}(\x)\Big) \Pi_\disc B^{(n+1)}(\x) \ud \x
+\dsp\dsp\int_{t^{(n)}}^{t^{(n+1)}}\int_\O {\bf D_B}|\nabla_\disc B^{(n+1)}(\x)|^2 \ud \x \ud t\\
&\quad= \delta t^{(n+\frac{1}{2})}\dsp\int_\O G(\Pi_\disc  A^{(n+1)}(\x), B^{(n+1)}(\x)) \Pi_\disc B^{(n+1)}(\x)  \ud \x.
\end{aligned}
\end{equation}
Applying the fact that $(r-s)\cdot r \geq \frac{1}{2}|r|^2-\frac{1}{2}|s|^2$ to the second term on the L.H.S of \eqref{ch5-nw-eq1} and to the first term on the L.H.S of \eqref{ch5-nw-eq1-1}, it follows
\begin{equation*}
\begin{aligned}
\delta t^{(n+\frac{1}{2})}\dsp\int_\O |\delta_\disc^{(n+\frac{1}{2})}A|^2 \ud \x
&+\frac{d_1}{2}\dsp\int_\O \left( |\nabla_\disc A^{(n+1)}|^2 
-|\nabla_\disc A^{(n)}|^2 \right)\ud \x\\
&\quad\leq \delta t^{(n+\frac{1}{2})}\dsp\int_\O F(\Pi_\disc A^{(n+1)},\Pi_\disc B^{(n+1)})\delta_\disc^{(n+\frac{1}{2})}A  \ud \x,
\end{aligned}
\end{equation*}
and 
\begin{equation*}
\begin{aligned}
\frac{1}{2}\dsp\int_\O \Big[ |\Pi_\disc B^{(n+1)}(\x)|^2&-|\Pi_\disc B^{(n)}(\x)|^2 \Big] \ud \x
+d_1\dsp\int_{t^{(n)}}^{t^{(n+1)}}\int_\O |\nabla_\disc B^{(n+1)}(\x)|^2 \ud \x \ud t\\
&\quad\leq \delta t^{(n+\frac{1}{2})}\int_\O G(\Pi_\disc  U^{(n+1)}(\x), B^{(n+1)}(\x)) \Pi_\disc B^{(n+1)}(\x)  \ud \x.
\end{aligned}
\end{equation*}
Summing the above inequalities over $n\in[0,m-1]$, where $m=0,...,N$ gives
\begin{equation}\label{ch5-new-eq2}
\begin{aligned}
\|\delta_\disc A\|_{L^2(\O\times(0,t^{(m)}))}^2
&+\frac{d_1}{2}\Big(\| \nabla_\disc A^{(m)} \|_{L^2(\O)^d}^2-\| \nabla_\disc A^{(0)} \|_{L^2(\O)^d}^2\Big)\\
&\leq \dsp\sum_{n=0}^{m-1}\delta t^{(n+\frac{1}{2})}\int_\O F(\Pi_\disc A^{(n+1)},\Pi_\disc B^{(n+1)}) \delta_\disc^{(n+\frac{1}{2})} A  \ud \x,
\end{aligned}
\end{equation}
and
\begin{equation}\label{eq-est10}
\begin{aligned}
\frac{1}{2}\Big(\| \Pi_\disc B^{(m)} \|_{L^2(\O)}^2&-\| \Pi_\disc B^{(0)} \|_{L^2(\O)}^2\Big)
+d_2\dsp\sum_{n=0}^{m-1}\delta t^{(n+\frac{1}{2})} \| \nabla_\disc B^{(n)}\|_{L^2(\O)^d}^2\\
&\quad\leq \dsp\sum_{n=0}^{m-1}\delta t^{(n+\frac{1}{2})}\int_\O G(\Pi_\disc  A^{(n+1)}(\x), B^{(n+1)}(\x)) \Pi_\disc B^{(n+1)}(\x)  \ud \x.
\end{aligned}
\end{equation}
This with the Cauchy--Schwarz inequality imply that
\begin{equation*}
\begin{aligned}
&\|\delta_\disc A\|_{L^2(\O\times(0,t^{(m)}))}^2
+\frac{d_1}{2}\Big(\| \nabla_\disc A^{(m)} \|_{L^2(\O)^d}^2-\| \nabla_\disc A^{(0)} \|_{L^2(\O)^d}^2\Big)\\
&\leq\dsp\sum_{n=0}^{m}\delta t^{(n+\frac{1}{2})}\| F(\Pi_\disc  A^{(n+1)}, \Pi_\disc B^{(n+1)})\|_{L^2(\O\times (0,T))}\|\delta_\disc^{(n+\frac{1}{2})} A \|_{L^2(\O\times(0,t))},
\end{aligned}
\end{equation*}
and  
\begin{equation*}
\begin{aligned}
&\frac{1}{2}\Big(\| \Pi_\disc B^{(m)} \|_{L^2(\O)}^2-\| \Pi_\disc B^{(0)} \|_{L^2(\O)}^2\Big)
+d_2\dsp\sum_{n=0}^{m-1}\delta t^{(n+\frac{1}{2})} \| \nabla_\disc B^{(n)}\|_{L^2(\O)^d}\\
&\quad\leq \dsp\sum_{n=0}^{m}\delta t^{(n+\frac{1}{2})}\| G(\Pi_\disc  A^{(n+1)}, \Pi B^{(n+1)})\|_{L^2(\O\times (0,T))} \; \| \Pi_\disc B^{(n+1)}\|_{L^2(\O\times (0,T))}.
\end{aligned}
\end{equation*}
Use the Lipschitz condition to arrive at
\begin{equation*}
\begin{aligned}
\|\delta_\disc A\|_{L^2(\O\times(0,t^{(m)}))}^2
&+\frac{d_1}{2}\Big(\| \nabla_\disc A^{(m)} \|_{L^2(\O)^d}^2-\| \nabla_\disc A^{(0)} \|_{L^2(\O)^d}^2\Big)\\
&\leq \dsp\sum_{n=0}^{m}\delta t^{(n+\frac{1}{2})} \|\delta_\disc^{(n+\frac{1}{2})} A \|_{L^2(\O\times(0,t))}\Big(M \| \Pi_\disc A^{(n+1)} \|_{L^2(\O)}\\
&+M \| \Pi_\disc B^{(n+1)} \|_{L^2(\O)}+ C_0\Big),
\end{aligned}
\end{equation*}
and
\begin{equation*}
\begin{aligned}
\frac{1}{2}\Big(\| \Pi_\disc B^{(m)} \|_{L^2(\O)}^2&-\| \Pi_\disc B^{(0)} \|_{L^2(\O)}^2\Big)
+d_2\dsp\sum_{n=0}^{m-1}\delta t^{(n+\frac{1}{2})} \| \nabla_\disc B^{(n)}\|_{L^2(\O)^d}\\ 
&\leq \dsp\sum_{n=0}^{m}\delta t^{(n+\frac{1}{2})}\Big(M \| \Pi_\disc B^{(n+1)} \|_{L^2(\O)}^2
\\
&+M \| \Pi_\disc B^{(n+1)} \|_{L^2(\O)} \| \Pi_\disc B^{(n+1)} \|_{L^2(\O)}\\
&+C_0 \| \Pi_\disc B^{(n+1)} \|_{L^2(\O)} \Big).
\end{aligned}
\end{equation*}
This with the Young's inequality give, where $1-\sum_{i=1}^3\varepsilon_i>0$
\begin{equation}\label{eq-new-lemma}
\begin{aligned}
\|\delta_\disc A\|_{L^2(\O\times(0,t^{(m)}))}^2
&+\frac{d_1}{2}\Big(\| \nabla_\disc A^{(m)} \|_{L^2(\O)^d}^2-\| \nabla_\disc A^{(0)} \|_{L^2(\O)^d}^2\Big)\\
&\leq \dsp\sum_{i=1}^3\frac{\varepsilon_i}{2}\|\delta_\disc A^{(n+\frac{1}{2})} \|_{L^2(\O\times(0,t))}^2\\
&+\dsp\sum_{n=0}^{m}\delta t^{(n+\frac{1}{2})}\Big(\frac{M}{2\varepsilon_1}\| \Pi_\disc A^{(n+1)} \|_{L^2(\O)}^2\\
&+\frac{M}{2\varepsilon_2}\| \Pi_\disc B^{(n+1)} \|_{L^2(\O)}^2+ \frac{1}{2\varepsilon_3}C_0\Big),
\end{aligned}
\end{equation}
and
\begin{equation}\label{eq-new-lemma-22}
\begin{aligned}
\frac{1}{2}\Big(\| \Pi_\disc B^{(m)} \|_{L^2(\O)^d}^2&-\| \Pi_\disc B^{(0)} \|_{L^2(\O)^d}^2\Big)
+d_2\dsp\sum_{n=0}^{m-1}\delta t^{(n+\frac{1}{2})} \| \nabla_\disc B^{(n)}\|_{L^2(\O)^d}^2\\
&\quad\leq \dsp\sum_{n=0}^{m}\delta t^{(n+\frac{1}{2})}
\Big( M(1+\frac{1}{2\varepsilon_4})\| \Pi_\disc B^{(n+1)} \|_{L^2(\O)}^2 \\
&+ \frac{\varepsilon_4}{2}\| \Pi_\disc A^{(n+1)} \|_{L^2(\O)}^2+ \frac{\varepsilon_5}{2}C_0^2 \Big).
\end{aligned}
\end{equation}
Thanks to the Gronwall inequality \cite[Lemma 5.1]{Gronwall}, Inequality \eqref{eq-new-lemma-22} can be rewritten as
\begin{equation*}
\begin{aligned}
\frac{1}{2}\| \Pi_\disc B^{(m)} \|_{L^2(\O)^d}^2&+d_2\dsp\sum_{n=0}^{m-1}\delta t^{(n+\frac{1}{2})} \| \nabla_\disc B^{(n)}\|_{L^2(\O)^d}^2
\\
&\quad\leq {\bf e}^{C_3}\Big(\frac{T\varepsilon_5}{2}C_0^2+\frac{\varepsilon_4}{2}\dsp\sum_{n=0}^{m-1}\delta t^{(n+\frac{1}{2})}\| \Pi_\disc A^{(n+1)} \|_{L^2(\O\times (0,T))}^2\\
&+\frac{1}{2}\| \Pi_\disc B^{(0)} \|_{L^2(\O)}^2\Big),
\end{aligned}
\end{equation*}
where $C_3$ depends on $T$, $M$, and $\varepsilon_4$. Combining this inequality together with \eqref{eq-new-lemma} yields
\begin{equation*}
\begin{aligned}
\Big(2&-\dsp\sum_{i=1}^3\frac{\varepsilon_i}{2}\Big)\|\delta_\disc A \|_{L^2(\O\times(0,t^{(m)}))}^2
+\Big(\frac{d_1}{2}+\frac{M}{2\varepsilon_1}-\frac{\varepsilon_4}{2}{\bf e}^{C_3}\Big)\|\nabla_\disc A^{(m)}\|_{L^2(\O)^d}^2\\
&+\frac{1}{2} \|\Pi_\disc B^{(m)}(\x)\|_{L^2(\O)}^2
+\Big(d_2-\frac{M}{2\varepsilon_2} \|\nabla_\disc B(\x,t)\|_{L^2(\O\times(0,t^{(m)}))^d}^2\\
&\leq\Big(\frac{T}{2\varepsilon_3}+\frac{T\varepsilon_5}{2}{\bf e}^{C_3}\Big)C_0^2
+\frac{d_1}{2}\| \nabla_\disc A^{(0)} \|_{L^2(\O)^d}^2
+\frac{{\bf e}^{C_3}}{2}\| \Pi_\disc B^{(0)} \|_{L^2(\O)}^2.
\end{aligned}
\end{equation*}
Take the supremum over $m\in[0,N]$ and use the real inequality $\sup_n(r_n+s_n) \leq \sup_n(r_n)+\sup{_n}(s_n)$ to obtain the desired estimates.
\end{proof}

In the following definition, we introduce a dual norm \cite{30}, which is defined on the space $\Pi_\disc(X_{\disc,0}) \subset L^2(\O)$, to ensure the required compactness results.

\begin{definition}\label{def-dual}
If $\disc$ be a gradient discretisation, then the dual norm $\| \cdot \|_{\star,\disc}$ on $\Pi_\disc(X_{\disc,0})$ is given by
\begin{equation}\label{eq-dual}
\forall U\in \Pi_\disc(X_{\disc,0}),\;
\| U \|_{\star,\disc}=\dsp\sup\Big\{ \dsp\int_\O U(\x)\Pi_\disc \psi(\x)\ud \x \; :\; \psi\in X_{\disc,0}, \| \nabla_\disc \psi \|_{L^2(\O)^d} =1
\Big\}. 
\end{equation}
\end{definition}

\begin{lemma}
\label{lemma-est-dual}
under Hypothesis \ref{assm-obs}, let $\disc$ be a gradient discretisation, which is coercive. If $B \in X_{\disc,0}$ satisfies \eqref{gs-pvi-obs2}, then there exists a constant $C_4$ depending only on $C_1$, $M$, $\O$, $T$ and $\| \Pi_\disc B^{(0)} \|_{L^2(\O)}$, such that
\begin{equation}\label{eq-est-dual}  
\dsp\int_0^T \| \delta_\disc B(t) \|_{\star,\disc}^2 \ud t \leq C_4.
\end{equation}
\end{lemma}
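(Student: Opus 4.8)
The plan is to test the discrete $B$--equation \eqref{gs-pvi-obs2} against an arbitrary $\psi\in X_{\disc,0}$ normalised by $\|\nabla_\disc\psi\|_{L^2(\O)^d}=1$, isolate the discrete time--derivative term, bound the two remaining terms uniformly in $\psi$, and then take the supremum. This produces a pointwise--in--time estimate for $\|\delta_\disc B(t)\|_{\star,\disc}$ whose square, once integrated in time, is controlled by the energy estimate of Lemma \ref{ch5-lem-derv}. Note first that $\delta_\disc^{(n+\frac12)}B=\Pi_\disc\big((B^{(n+1)}-B^{(n)})/\delta t^{(n+\frac12)}\big)$ indeed lies in $\Pi_\disc(X_{\disc,0})$, so the dual norm of Definition \ref{def-dual} applies to it.

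Fix $n\in\{0,\dots,N-1\}$ and $\psi\in X_{\disc,0}$ with $\|\nabla_\disc\psi\|_{L^2(\O)^d}=1$. Rearranging \eqref{gs-pvi-obs2} gives
\[
\int_\O \delta_\disc^{(n+\frac12)}B\,\Pi_\disc\psi \ud\x = -\int_\O {\bf D_B}\nabla_\disc B^{(n+1)}\cdot\nabla_\disc\psi \ud\x + \int_\O G(\Pi_\disc A^{(n+1)},\Pi_\disc B^{(n+1)})\Pi_\disc\psi \ud\x.
\]
For the diffusion term, Cauchy--Schwarz with the eigenvalue bound on ${\bf D_B}$ from Hypothesis \ref{assm-obs} yields the bound $d_2\|\nabla_\disc B^{(n+1)}\|_{L^2(\O)^d}$, since $\|\nabla_\disc\psi\|=1$. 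For the reaction term, Cauchy--Schwarz produces $\|G(\cdots)\|_{L^2(\O)}\,\|\Pi_\disc\psi\|_{L^2(\O)}$, where the coercivity bound of Definition \ref{def:corv-rm} controls the test factor, $\|\Pi_\disc\psi\|_{L^2(\O)}\le C_\disc\|\nabla_\disc\psi\|_{L^2(\O)^d}=C_\disc$, and the Lipschitz property of $G$ gives $\|G(\Pi_\disc A^{(n+1)},\Pi_\disc B^{(n+1)})\|_{L^2(\O)}\le C_0|\O|^{1/2}+M\big(\|\Pi_\disc A^{(n+1)}\|_{L^2(\O)}+\|\Pi_\disc B^{(n+1)}\|_{L^2(\O)}\big)$.

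Taking the supremum over admissible $\psi$ then gives
\[
\|\delta_\disc^{(n+\frac12)}B\|_{\star,\disc}\le d_2\|\nabla_\disc B^{(n+1)}\|_{L^2(\O)^d}+C_\disc\big(C_0|\O|^{1/2}+M\|\Pi_\disc A^{(n+1)}\|_{L^2(\O)}+M\|\Pi_\disc B^{(n+1)}\|_{L^2(\O)}\big).
\]
Squaring (using $(\sum_{i=1}^4 a_i)^2\le 4\sum_i a_i^2$), multiplying by $\delta t^{(n+\frac12)}$ and summing over $n$ reconstitutes $\int_0^T\|\delta_\disc B(t)\|_{\star,\disc}^2 \ud t$ on the left. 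On the right, the term $\sum_n\delta t^{(n+\frac12)}\|\nabla_\disc B^{(n+1)}\|_{L^2}^2=\|\nabla_\disc B\|_{L^2(\O\times(0,T))^d}^2$ and $\sum_n\delta t^{(n+\frac12)}\|\Pi_\disc B^{(n+1)}\|_{L^2}^2\le T\|\Pi_\disc B\|_{L^\infty(0,T;L^2(\O))}^2$ are bounded by $C_2^2$ through Lemma \ref{ch5-lem-derv}; the constant piece contributes $TC_0^2|\O|$; and for the $A$--contribution I would apply coercivity once more, $\|\Pi_\disc A^{(n+1)}\|_{L^2}\le C_\disc\|\nabla_\disc A^{(n+1)}\|_{L^2}\le C_\disc\|\nabla_\disc A\|_{L^\infty(0,T;L^2)}\le C_\disc C_2$, so that $\sum_n\delta t^{(n+\frac12)}\|\Pi_\disc A^{(n+1)}\|_{L^2}^2\le TC_\disc^2C_2^2$. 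Collecting these terms produces the claimed constant $C_4$, depending only on the coercivity constant, $M$, $\O$, $T$ and the energy--estimate constant $C_2$ (hence ultimately on the initial data).

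I expect no genuine obstacle beyond this bookkeeping: the single point requiring care is that coercivity is precisely what converts the $L^2$--norms of $\Pi_\disc\psi$ and $\Pi_\disc A^{(n+1)}$ left over after Cauchy--Schwarz into gradient norms already controlled by Lemma \ref{ch5-lem-derv}, while the nonlinearity enters only through the globally Lipschitz $G$, whose at--most--linear growth is absorbed by the a priori energy bounds.
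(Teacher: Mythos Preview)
Your proposal is correct and follows essentially the same approach as the paper's own proof: test \eqref{gs-pvi-obs2} with an arbitrary $\psi$ of unit gradient norm, bound the diffusion term by Cauchy--Schwarz and the reaction term by Cauchy--Schwarz plus the Lipschitz bound on $G$ plus coercivity, take the supremum, then square, multiply by $\delta t^{(n+\frac12)}$, sum over $n$, and invoke the energy estimate \eqref{est-new}. If anything, your write-up is more careful than the paper's (e.g.\ you track the $|\O|^{1/2}$ factor on the constant $C_0$ and explicitly use coercivity a second time to control $\|\Pi_\disc A^{(n+1)}\|_{L^2(\O)}$ via $\|\nabla_\disc A\|_{L^\infty(0,T;L^2(\O)^d)}$).
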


\begin{proof}
Putting $\psi =\phi$ in \eqref{gs-pvi-obs2} together with the Cauchy--Schwarz inequality the coercivity property imply 
\[
\begin{aligned}
\dsp\int_\O \delta_\disc^{(n+\frac{1}{2})} &B(\x) \Pi_\disc \phi(\x) \ud \x\\
&\quad\leq d_2 \| \nabla_\disc B^{(n+1)} \|_{L^2(\O\times(0,T))^d} \| \nabla_\disc \phi \|_{L^2(\O\times(0,T))^d}\\
&\qquad+ \left( M \| \Pi_\disc B^{(n+1)} \|_{L^2(\O\times(0,T))}
+ M \| \Pi_\disc A^{(n+1)} \|_{L^2(\O\times(0,T))} + C_0 \right) \| \Pi_\disc \phi\|_{L^2(\O)}\\
&\quad\leq  \| \nabla_\disc \phi \|_{L^2(\O)^d} \Big[ d_2 \| \nabla_\disc B^{(n+1)} \|_{L^2(\O\times(0,T))^d}\\
&\qquad+C_\disc \Big( M \| \Pi_\disc B^{(n+1)} \|_{L^2(\O\times(0,T))} + M \| \Pi_\disc A^{(n+1)} \|_{L^2(\O\times(0,T))}
+ C_0 \Big) \Big].
\end{aligned}
\]
Taking the supremum over $\phi\in X_{\disc,0}$ with $\| \nabla_\disc \phi \|_{L^2(\O)^d}=1$, multiplying by $\delta t^{(n+1)}$, summing over $n\in[0,N-1]$, and using \eqref{est-new} yield the desired estimate.
\end{proof}


\begin{theorem}\label{theorem-conver-rm}
With Hypothesis \eqref{assm-obs}, let $(\disc_m)_{m\in\NN}$ be a sequence of gradient discretisation, that is coercive, limit-conforming, consistent and compact, and such that $\cK_{\disc_m}$ is a non empty set for any $m\in\NN$. For $m \in \NN$, let $(A_m,B_m) \in \cK_{\disc_m}^{N_m+1} \times X_{\disc_m,0}^{N_m+1}$ be solutions to the scheme \eqref{gs-pvi} with $\disc=\disc_m$. Then there exists a solution $(\bar A,\bar B)$ for the discrete problem \eqref{pvi-obs-weak}, and a subsequence of gradient discretisation indexed by $(\disc_m)_{m\in\NN}$, such that, as $m\to\infty$,
\begin{enumerate}
\item $\Pi_{\disc_m}A_m\to A$ and $\Pi_{\disc_m}B_m\to B$ strongly in $L^\infty(0,T;L^2(\O))$,
\item $\nabla_{\disc_m}A_m\to \nabla A$ and $\nabla_{\disc_m}B_m\to \nabla B$ strongly in $L^2(\O\times (0,T))^d$,
\item $\delta_{\disc_m}A_m$ converges weakly to $\partial_t\bar A$ in $L^2(\O\times(0,T))$.
\end{enumerate}
\end{theorem}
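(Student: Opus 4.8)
The plan is to follow the standard compactness strategy of the GDM for parabolic problems, combining the a priori estimates already proven with the compactness and consistency hypotheses to extract convergent subsequences, and then passing to the limit in the scheme to identify the limit as a weak solution.

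First I would establish the convergence of subsequences. The energy estimate \eqref{est-new} from Lemma \ref{ch5-lem-derv} bounds $\|\nabla_{\disc_m}A_m\|_{L^\infty(0,T;L^2(\O)^d)}$, $\|\delta_{\disc_m}A_m\|_{L^2(\O\times(0,T))}$, $\|\Pi_{\disc_m}B_m\|_{L^\infty(0,T;L^2(\O))}$ and $\|\nabla_{\disc_m}B_m\|_{L^2(\O\times(0,T))^d}$ uniformly in $m$. By the compactness hypothesis (Definition \ref{def:compact}) applied at a.e. time, together with the dual-norm estimate \eqref{eq-est-dual} of Lemma \ref{lemma-est-dual}, a discrete Aubin--Simon type argument yields strong $L^2$ compactness in time for $\Pi_{\disc_m}B_m$; similarly the bound on $\delta_{\disc_m}A_m$ gives the time-translate control needed for $\Pi_{\disc_m}A_m$. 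I would therefore extract a subsequence such that $\Pi_{\disc_m}A_m\to\bar A$ and $\Pi_{\disc_m}B_m\to\bar B$ strongly in $L^2(\O\times(0,T))$ (upgrading to $L^\infty(0,T;L^2(\O))$ using the uniform time-regularity), $\delta_{\disc_m}A_m\rightharpoonup\partial_t\bar A$ weakly in $L^2(\O\times(0,T))$, and $\nabla_{\disc_m}A_m\rightharpoonup\nabla\bar A$, $\nabla_{\disc_m}B_m\rightharpoonup\nabla\bar B$ weakly. The limit-conformity hypothesis identifies these weak gradient limits as the genuine weak gradients of $\bar A,\bar B\in L^2(0,T;H_0^1(\O))$, and the consistency of the interpolants $I_{\disc_m},J_{\disc_m}$ fixes the initial data $\bar A(\cdot,0)=A_{\rm ini}$, $\bar B(\cdot,0)=B_{\rm ini}$. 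The constraint $\bar A\le\chi$ (i.e. $\bar A\in\mathbb K$) passes to the limit because each $\Pi_{\disc_m}A_m^{(n)}\le\chi_{\disc_m}$ and $\chi_{\disc_m}\to\chi$.

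Next I would pass to the limit in the scheme. For the linear equation \eqref{gs-pvi-obs2}, test with $\psi=J_{\disc_m}$ of a smooth function and use consistency plus the strong convergence of $\Pi_{\disc_m}A_m,\Pi_{\disc_m}B_m$ (so that $G(\Pi_{\disc_m}A_m,\Pi_{\disc_m}B_m)\to G(\bar A,\bar B)$ strongly in $L^2$, by continuity and the Lipschitz bound of $G$) to recover \eqref{pvi-obs-w2} for all $\psi\in L^2(0,T;H_0^1(\O))$, after first establishing it on the dense subset and using density. For the variational inequality \eqref{gs-pvi-obs1}, I would choose a test function $\bar w_\kappa\in\mathbb L_\kappa$ from the dense set \eqref{ch5-dense-set} and invoke Lemma \ref{ch5-lem-intp} to produce $w_m\in\cK_{\disc_m}$ with $\Pi_{\disc_m}w_m\to\bar w_\kappa$ and $\nabla_{\disc_m}w_m\to\nabla\bar w_\kappa$ strongly; testing \eqref{gs-pvi-obs1} with $\varphi=w_m^{(n+1)}$ and summing in time gives a discrete inequality amenable to limit passage.

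The main obstacle is the passage to the limit in the diffusion term of the inequality \eqref{gs-pvi-obs1}, because the term $\int\!\!\int{\bf D_A}\nabla_{\disc_m}A_m\cdot\nabla_{\disc_m}A_m$ is quadratic in the weakly-converging gradient and is \emph{not} weakly continuous; one only gets lower semicontinuity, $\liminf\int\!\!\int{\bf D_A}|\nabla_{\disc_m}A_m|^2\ge\int\!\!\int{\bf D_A}|\nabla\bar A|^2$, which fortunately has the correct sign for a $\le$ inequality since this term appears with $\nabla_{\disc_m}A_m$ on the ``bad'' side. Concretely, I would rewrite the diffusion contribution as $\int\!\!\int{\bf D_A}\nabla_{\disc_m}A_m\cdot\nabla_{\disc_m}(A_m-w_m)$ and use the identity $\nabla_{\disc_m}A_m\cdot\nabla_{\disc_m}(A_m-w_m)=\nabla_{\disc_m}A_m\cdot\nabla_{\disc_m}A_m-\nabla_{\disc_m}A_m\cdot\nabla_{\disc_m}w_m$; the cross term passes by weak-times-strong convergence, while the square term is handled by weak lower semicontinuity, yielding exactly the inequality $\le$ in \eqref{pvi-obs-w1} after also treating the time-derivative term (where the discrete integration-by-parts/convexity inequality $\int\delta_\disc A\,\Pi_\disc(A^{(n+1)}-w)$ is controlled using the weak convergence of $\delta_{\disc_m}A_m$ and the strong convergence of $\Pi_{\disc_m}A_m$). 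Finally, a density argument in $\kappa$ extends the recovered inequality from $\mathbb L_\kappa$ to all $\varphi\in\mathbb K$, and uniqueness of the limit promotes convergence of the whole sequence.
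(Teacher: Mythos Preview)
Your plan matches the paper's compactness strategy (extract subsequences via the energy and dual-norm estimates, identify the weak limits using limit-conformity, pass to the limit in the inequality via weak lower semicontinuity of the quadratic form and Lemma~\ref{ch5-lem-intp}, then density). However, there is a genuine omission: you never return to prove item~(2), the \emph{strong} convergence of $\nabla_{\disc_m}A_m$ and $\nabla_{\disc_m}B_m$ in $L^2(\O\times(0,T))^d$. You only extract weak limits for the gradients, and nothing in your limit-passage argument upgrades this. The paper devotes its Step~4 to this: once $(\bar A,\bar B)$ is known to solve \eqref{pvi-obs-weak}, take $\varphi=\bar A$ in the limiting inequality to obtain $\limsup_m\int_0^T\!\int_\O{\bf D_A}\nabla_{\disc_m}A_m\cdot\nabla_{\disc_m}A_m\le\int_0^T\!\int_\O{\bf D_A}\nabla\bar A\cdot\nabla\bar A$, which combined with the $\liminf$ inequality from weak lower semicontinuity gives convergence of norms and hence strong convergence; an analogous energy argument via \eqref{gs-pvi-obs2} and \eqref{pvi-obs-w2} with $\psi=B_m$ and $\psi=\bar B$ handles $\nabla_{\disc_m}B_m$. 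Without this step your proof is incomplete.

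Two smaller points. First, the theorem only claims subsequential convergence; your final remark that ``uniqueness of the limit promotes convergence of the whole sequence'' is unsupported, since no uniqueness result for \eqref{pvi-obs-weak} has been established in the paper. Second, the paper also includes (Step~1) a brief existence argument for the discrete solutions $(A^{(n+1)},B^{(n+1)})$ via Stampacchia's theorem and a Brouwer fixed-point; the theorem statement presupposes such solutions, so this is arguably optional, but if you intend a self-contained proof you should mention it.
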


\begin{proof}
The proof is divided into four stages and its idea is inspired by \cite{YH-2021-P1}.

\medskip
\noindent\textbf{Step 1: Existence of approximate solutions.}\\
At $(n+1)$, we see that \eqref{gs-pvi-obs1} and \eqref{gs-pvi-obs2} respectively express a system of non linear elliptic variational inequality on $A^{(n+1)}$ and non linear equations on $B^{(n+1)}$. For $w=(w_1,w_2) \in  \cK_\disc \times X_{\disc,0}$, we see that $(A,B)\in \cK_\disc \times X_{\disc,0}$ satisfies
\begin{subequations}\label{gs-pvi-sol}
\begin{equation}\label{gs-pvi-obs1-sol}
a(A^{(n+1)},A^{(n+1)}-\varphi) \leq L(A^{(n+1)}-\varphi), \quad \forall \varphi \in \cK_\disc,
\mbox{ and }
\end{equation}
\begin{equation}\label{gs-pvi-obs2-sol}
\begin{array}{ll}
\dsp\int_\O \Pi_{\disc}\dsp\frac{B^{(n+1)}-B^{(n)}}{\delta t^{(n+\frac{1}{2})}}(x) \Pi_\disc \psi(\x)
+ \dsp\int_\O {\bf D_B}\nabla_\disc B^{(n+1)}(\x) \cdot \nabla_\disc \psi(\x)\ud \x\\
= \dsp\int_\O G(\Pi_\disc w_1, \Pi_{\disc} w_2)\Pi_\disc \psi(\x) \ud \x, \quad \forall \psi \in X_{\disc,0},
\end{array}
\end{equation}
\end{subequations}
where $\alpha:=\frac{1}{\delta t^{(n+\frac{1}{2})}}$ and the bilinear and linear forms are defined by 
\[
a(\phi,z)=\alpha \dsp\int_\O \Pi_\disc \phi  \Pi_\disc z \ud \x +{\bf D_A}\int_\O \nabla_\disc \phi \cdot \nabla_\disc z \ud \x,\; \forall \phi,z \in \cK_\disc \quad \mbox{ and }
\]
\[L(z)=\dsp\int_\O F(\Pi_\disc w_1, \Pi_\disc w_2) \Pi_\disc z \ud \x + \alpha \int_\O \Pi_\disc A^{(n)} \Pi_\disc z \ud \x,\; \forall z \in \cK_\disc.
\]

Stampacchia's theorem implies that there exists  $\bar A\in \cK_\disc$ satisfying the inequality \eqref{gs-pvi-obs1-sol}. The second equation \eqref{gs-pvi-obs2-sol} describes a linear square system. Taking $\varphi=B^{(n+1)}$ in \eqref{gs-pvi-obs2-sol}, using the similar reasonings to the proof of Lemma \ref{ch5-lem-derv}, and setting $G={\bf 0}$ yield $\| \nabla_\disc B^{(n+1)} \|_{L^2(\O)^d}=0$. This yields the matrix corresponding to the linear system is invertible. Consider the continuous  mapping $\mathbb T:\cK_\disc \times X_{\disc,0} \to \cK_\disc \times X_{\disc,0}$, where $\mathbb T(w)=(A,B)$ with $(A,B)$ is the solution to \eqref{gs-pvi-sol}. The existence of a solution $(A^{(n+1)},B^{(n+1)})$ to the non linear system is consequence of Brouwer's fixed point theorem.

\vskip .5pc
\noindent\textbf{Step 2: Strong convergence of $\Pi_{\disc_m}A_m$ and $\Pi_{\disc_m}B_m$ in $L^\infty(0,T;L^2(\O))$ and the weak convergence of $\delta_{\disc_m}A_m$ in $L^2(\O\times(0,T))$.}
\\
Applying Estimate \eqref{est-new} to the sequence of solutions $((A_m)_{m\in\NN},(B_m)_{m\in\NN})$ of the scheme \eqref{gs-pvi} shows that $\| \nabla_{\disc_m}A_m \|_{L^2(\O\times(0,T))^d}$ and $\| \nabla_{\disc_m}B_m \|_{L^2(\O\times(0,T))^d}$ are bounded. Using \cite[Lemma 4.8]{30}, there exists a sequence, still denoted by $(\disc_m^T)_{m\in\NN}$, and $\bar A,\; \bar B\in L^2(0,T;H_0^1(\O))$,
such that, as $m\to \infty$, $\Pi_{\disc_m}A_m$ converges weakly to $\bar A$ in $L^2(\O\times(0,T))$, $\nabla_{\disc_m}A_m$ converges weakly to
$\nabla\bar A$ in $L^2(\O\times(0,T))^d$, $\Pi_{\disc_m}B_m$ converges weakly to $\bar B$ in $L^2(\O\times(0,T))$ and $\nabla_{\disc_m}B_m$ converges weakly to
$\nabla\bar B$ in $L^2(\O\times(0,T))^d$. Since $A_m \in \cK_{\disc_m}$, passing to the limit in $\Pi_{\disc_m}A_m \leq \chi$ in $\O$ shows that $\bar A \leq \chi$ in $\O$. Thanks to \cite[Theorem 4.31]{30}, Estimate \eqref{est-new} shows that $\bar A\in C([0,T];L^2(\O))$, $\Pi_{\disc_m}A_m$ converges strongly to $\bar A$ in $L^\infty(0,T;L^2(\O))$ and $\delta_{\disc_m}A_m$ converges weakly to $\partial_t \bar A$ in $L^2(0,T;L^2(\O))$. 

Let us show the strong convergence of $\Pi_{\disc_m}B_m$ to $\bar B$ in $L^\infty(0,T;L^2(\O))$. Indeed, $\Pi_{\disc_m}B_m$ converges strongly to $\bar B$ in $L^2(\O\times(0,T))$, thanks to Estimate \eqref{eq-est-dual}, consistency, limit--conformity and compactness, and \cite[Theorem 4.14]{30}. We can apply the dominated convergence theorem to show that $G(\Pi_{\disc_m}A_m,\Pi_{\disc_m}B_m) \to G(\bar A,\bar B)$ in $L^2(\O\times(0,T))$, thanks to the hypothesis on $G$ given in Hypothesis \ref{assm-obs}.

Let $t_0 \in [0,T]$ and define the sequence $t_m \in [0,T]$, such that $t_m \to t_0$, as $m \to \infty$. For $k(m) \in [0,N_m-1]$ such that $k_m 
\in (t^{(s(m))}, t^{(s(m)+1)}]$. Following technique used in Lemma \ref{ch5-lem-derv}, one can attain 
\begin{equation}\label{eq-prf-thm1}
\begin{aligned}
\frac{1}{2}\dsp\int_\O &(\Pi_{\disc_m} B(\x,t_m))^2 \ud \x\\
&\leq \frac{1}{2}\dsp\int_\O (\Pi_{\disc_m} J_{\disc_m}B_{\rm ini}(\x))^2 \ud x 
- \dsp\int_0^{t^{(s(m))}}\dsp\int_\O {\bf D_B}(\nabla_{\disc_m}B(\x,t))^2 \ud \x \ud t\\
&\leq \dsp\int_0^{t^{(s(m))}}\dsp\int_\O G(\Pi_{\disc_m}A(\x,t),\Pi_{\disc_m}B(\x,t)) \Pi_{\disc_m}B(\x,t) \ud \x \ud t.
\end{aligned}
\end{equation}

Using the characteristic function, it is obvious to note that, as $m \to \infty$,
\begin{equation*}
\begin{aligned}
&\Pi_{\disc_m}B_m \to \bar B \mbox{ strongly in } L^2(\O \times (0,T)), \mbox{ and,}\\
&{\bf 1}_{[0,t^{(s(m))}]}\nabla\bar B \to {\bf 1}_{[0,t_0]} \nabla\bar B \mbox{ strongly in } L^2(\O \times (0,T))^d,
\end{aligned}
\end{equation*}
These convergence results imply that 
\begin{equation*}
\begin{aligned}
\dsp\int_0^{t_0}\dsp\int_\O &{\bf D_B}(\nabla\bar B(\x,t))^2 \ud \x \ud t\\
&=\dsp\int_0^{t^{(s(m))}}\dsp\int_\O {\bf 1}_{[0,s]}{\bf D_B}(\nabla\bar B(\x,t))^2 \ud \x \ud t\\
&=\dsp\lim_{m\to \infty}\dsp\int_0^T\dsp\int_\O {\bf 1}_{[0,t^{(s(m))}]}{\bf D_B}\nabla\bar B(\x,t) \cdot \nabla_{\disc_m}B_m(\x,t) \ud \x \ud t\\
&\leq \dsp\liminf_{m\to \infty}\Big( \| {\bf 1}_{[0,t^{(s(m))}]}\nabla\bar B \|_{L^2(\O\times(0,T))^d} \cdot \| {\bf 1}_{[0,t^{(s(m))}]}{\bf D_B}\nabla_{\disc_m}B_m \|_{L^2(\O\times(0,T))^d} \Big)\\
&= \| {\bf 1}_{[0,t_0]}\nabla\bar B \|_{L^2(\O\times(0,T))^d} \cdot \dsp\liminf_{m\to \infty} \| {\bf 1}_{[0,t^{(s(m))}]}{\bf D_B}\nabla_{\disc_m}B_m \|_{L^2(\O\times(0,T))^d}. 
\end{aligned}
\end{equation*}
Dividing this inequality by $\| {\bf 1}_{[0,t_0]}\nabla\bar B \|_{L^2(\O\times(0,T))^d}$ gives
\begin{equation}\label{eq-liminf1}
\begin{aligned}
\dsp\int_0^{t_0}\dsp\int_\O {\bf D_B}(\nabla\bar B(\x,t))^2 \ud \x \ud t \leq \dsp\liminf_{m\to\infty}\dsp\int_0^{t^{(s(m))}}\dsp\int_\O {\bf D_B}( \nabla_{\disc_m}B_m(\x,t))^2 \ud \x \ud t.
\end{aligned}
\end{equation}
Together with passing to limit superior in \eqref{eq-prf-thm1}, we can arrive at
\begin{equation}\label{eq-20}
\begin{aligned}
\dsp\limsup_{m\to \infty}\frac{1}{2}\dsp\int_\O (\Pi_{\disc_m}&B_m(\x,t_m))^2 \ud \x\\
&\leq \frac{1}{2}\dsp\int_\O B_{\rm ini}(\x)^2 \ud \x
-\dsp\int_0^{t_0}\dsp\int_\O {\bf D_B} (\nabla\bar B(\x,t))^2 \ud \x \ud t\\
&\quad+\dsp\int_0^{t_0}\dsp\int_\O G(\bar A(\x,t),\bar B(\x,t)) \bar B(\x,t)
 \ud \x \ud t.
\end{aligned}
\end{equation}
Letting $\psi=\bar B{\bf 1}_{[0,t_0]}(t)$ in \eqref{gs-pvi-obs2} and integrating by part to obtain
\begin{equation}\label{eq-multline-1}
\begin{aligned}
\frac{1}{2}\dsp\int_\O (\bar B(\x,t_0))^2 \ud \x &+ \dsp\int_0^{t_0}\dsp\int_\O {\bf D_B}(\nabla \bar B(\x,t))^2 \ud \x \ud t\\
&\quad=\frac{1}{2}\dsp\int_\O B_{\rm ini}(\x)^2 \ud \x
+\dsp\int_0^{t_0}\dsp\int_\O G(\bar A(\x,t),\bar B(\x,t)) \bar B(\x,t) \ud \x \ud t.
\end{aligned}
\end{equation}
From \eqref{eq-20}, \eqref{eq-multline-1}, we obtain
\begin{equation}\label{eq-limsup-3}
\dsp\limsup_{m\to \infty}\dsp\int_\O (\Pi_{\disc_m}B_m(\x,t_m))^2 \ud \x
\leq \dsp\int_\O \bar B(\x,t_0)^2 \ud \x.
\end{equation}
Estimates \eqref{est-new} and \eqref{eq-est-dual}, and \cite[Theorem 4.19]{30} imply the weak convergence of $(\Pi_{\disc_m}B_m)_{m\in\NN}$ to $\bar B$ in $L^2(\O)$, it is indeed uniformly in $[0,T]$. This yields the weak convergence of $\Pi_{\disc_m}B_m(\cdot,s_m)$ to $\bar B(\cdot,t_0)$ in $L^2(\O)$. As a consequence of Estimate \eqref{eq-limsup-3}, this convergence of $\Pi_{\disc_m}B_m(\cdot,s_m)$ holds in the strong sense in $L^2(\O)$. With the continuity of $\bar B:[0,T] \to L^2(\O)$, we can apply \cite[Lemma C.13]{30} to conclude the strong convergence of $\Pi_{\disc_m}B_m$ in $L^\infty(0,T;L^2(\O))$ .    

\vskip .5pc
\noindent\textbf{Step 3: Convergence towards the solution to the continuous model.}\\
Recall that $A_m^{(0)}=I_{\disc_m}\bar A_{\rm ini}$, therefore the consistency shows that $\Pi_{\disc_m}A_m^{(0)}$ converges strongly to $A_{\rm ini}$ in $L^2(\O)$, as $m \to \infty$. Hence, $\bar A\in C([0,T];L^2(\O))\cap \mathbb K$ and $\bar A$ satisfies all conditions except the integral inequality imposed on the exact solution of Problem \eqref{pvi-obs-w1}. Let us now show that this integral relation holds. With Hypothesis \ref{assm-obs}, the dominated convergence theorem leads to $F(\Pi_{\disc_m}A_m,\Pi_{\disc_m}B_m) \to F(\bar A,\bar B)$ in $L^2(\O\times(0,T))$. The $L^2$--weak convergence of $\nabla_{\disc_m}A_m$ yields
\begin{equation}\label{ch5-liminf}
\dsp\int_0^T\int_\O {\bf D_A}\nabla\bar A\cdot \nabla\bar A \ud \x \ud t 
\leq \liminf_{m\to \infty}\dsp\int_0^T\int_\O {\bf D_A}\nabla_{\disc_m}A_m\cdot \nabla_{\disc_m} A_m \ud \x \ud t. 
\end{equation}

Fix $\kappa>0$ and let $\bar w_\kappa\in \mathbb L_\kappa$, where $\mathbb L_\kappa$ is defined by \eqref{ch5-dense-set}. Thanks to Lemma \ref{ch5-lem-intp}, there exists a sequence $(w_m)_{m\in\NN}$ such that $w_m \in \cK_{\disc_m}^{N_m+1}$ and $\Pi_{\disc_m}w_m \to \bar w_\kappa$ strongly in $L^2(\O\times(0,T))$ and $\nabla_{\disc_m}w_m \to \nabla\bar w_\kappa$ strongly in $L^2(\O\times(0,T))^d$. Setting $\varphi:=w_m$ as a generic function in \eqref{gs-pvi-obs1}, Inequality \eqref{ch5-liminf} implies that
\begin{align*}
\dsp\int_0^T\int_\O {\bf D_A}\nabla\bar A\cdot \nabla\bar A \ud \x \ud t 
\leq& \liminf_{m\to\infty}\bigg[\dsp\int_0^T\int_\O F(\Pi_{\disc_m}A_m,\Pi_{\disc_m}B_m) \Pi_{\disc_m}(A_m-w_m) \ud \x \ud t\\
&+\dsp\int_0^T\int_\O {\bf D_A}\nabla_{\disc_m}A_m\cdot \nabla_{\disc_m} w_m \ud \x \ud t\\
&-\dsp\int_0^T\int_\O\delta_{\disc_m}A_m \Pi_{\disc_m}(A_m-w_m) \ud \x \ud t\bigg].
\end{align*}
With the weak--strong convergences, pass to the limit in this relation to obtain, for all $w_\kappa\in \mathbb L_\kappa$, for all $\kappa>0$,
\begin{equation*}
\left.
\begin{array}{ll}
\dsp\int_0^T\int_\O \partial_t \bar A(\x,t)(u-w_\kappa)(\x,t) \ud \x \ud t +\dsp\int_0^T\int_\O {\bf D_A}\nabla\bar A \cdot \nabla(\bar A-w_\kappa)(\x,t)\ud \x \ud t\\
[1em]
\leq\dsp\int_0^T \int_\O F(\bar A(\x,t),\bar B(\x,t))(\bar A-w_\kappa)(\x,t)\ud \x \ud t.
\end{array}
\right.
\end{equation*}
By the density of the set $C_0^\infty(\overline\O)\cap \cK$ in $\cK$ proved in \cite{Density-ref}, every $\varphi\in \mathbb K$ can be expressed in a piecewise constant function in time $w_\kappa \in \mathbb L_\kappa$ such that $w_\kappa \to \varphi$ strongly in $L^2(0,T;H_0^1(\O))$ as $\kappa\to 0$ (note that $w_\kappa \leq \chi$ in $\O \times (0,T)$). Hence, \eqref{pvi-obs-w1} is verified.


Let us verify the integral equality \eqref{pvi-obs-w2}. Let $\psi$ be a generic function in $L^2(0,T;L^2(\O))$ and satisfies $\partial_t \psi \in L^2(\O\times(0,T))$ and $\psi(T,\cdot)=0$. Using the technique in \cite[Lemma 4.10]{30}, we can construct  $w_m=(w_m^{(n)})_{n=0,...,N_m} \in X_{\disc_m,0}^{N_m+1}$, such that $\Pi_{\disc_m}w_m \to \psi$ in $L^2(0,T;L^2(\O))$ and $\delta_{\disc_m}w_m \to \partial_t \psi$ strongly in $L^2(\O\times(0,T))$. Take $\psi=\delta t_m ^{(n+\frac{1}{2})}w_m^{(n)}$ as a generic function in \eqref{gs-pvi-obs2} and sum on $n\in[0,N_m-1]$ to get
\begin{equation}\label{eq-part1}
\begin{aligned}
\dsp\sum_{n=0}^{N_m-1}\dsp\int_\O [ \Pi_{\disc_m}B_m^{(n+1)}(\x)&-\Pi_{\disc_m}B^{(n)}(\x) ]\Pi_{\disc_m}w_m^{(n)}(\x) \ud x\\
&\quad+\dsp\int_0^T\dsp\int_\O {\bf D_B}(\x)\nabla_{\disc_m}B_m(\x,t) \cdot \nabla_{\disc_m}w_m(\x,t) \ud \x \ud t\\
&=\dsp\int_0^T\dsp\int_\O G(\Pi_{\disc_m}A(\x,t),\Pi_{\disc_m}B(\x,t))\Pi_{\disc_m}w_m(\x,t) \ud x \ud t.
\end{aligned}
\end{equation}
Applying the formula \cite[Eq. (D.15)]{30} to the right hand side yeilds, thanks to $w^{(N)}=0$ 
\begin{equation*}
\begin{aligned}
-\dsp\int_0^T \dsp\int_\O \Pi_{\disc_m}B_m(\x,t)&\delta_{\disc_m}w_m(\x,t) \ud \x \ud t 
-\int_\O \Pi_{\disc_m}B_m^{(0)}(\x)\Pi_{\disc_m}w_m^{(0)}(\x) \ud \x\\
&+\dsp\int_0^T\dsp\int_\O {\bf D_B}(\x)\nabla_{\disc_m}B_m(\x,t) \cdot \nabla_{\disc_m}w_m(\x,t) \ud \x \ud t\\
&\quad=\dsp\int_0^T\dsp\int_\O G(\Pi_{\disc_m}A(\x,t),\Pi_{\disc_m}B(\x,t))\Pi_{\disc_m}w_m(\x,t) \ud x \ud t.
\end{aligned}
\end{equation*}
By the consistency, we see $\Pi_{\disc_m}B_m^{(0)}=\Pi_{\disc_m}J_{\disc_m}B_{\rm ini} \to B_{\rm ini}$ in $L^2(\O)$. This with passing to the limit $m\to \infty$ implies, for all $\psi$ in $L^2(0,T;H^1(\O))$,
\begin{equation*}
\begin{aligned}
&-\dsp\int_0^T\dsp\int_\O \partial_t \psi(\x,t)\bar B(\x,t) \ud \x \ud t
+\dsp\int_0^T\int_\O {\bf D_B}\nabla\bar B \cdot \nabla\psi(\x,t)\ud \x \ud t\\
&=\dsp\int_0^T \int_\O G(\bar A,\bar B)\psi(\x,t)\ud \x \ud t.
\end{aligned}
\end{equation*}
Since $C^\infty([0,T];H^1(\O))$ is dense in $L^2(0,T;H^1(\O))$, integrating by part  shows that the above equality can be expressed in the sense of distributions, which is equivalent to \eqref{pvi-obs-w2}.


\medskip
\noindent\textbf{Step 4: Proof of the strong convergence of $\nabla_{\disc_m}A_m$ and $\nabla_{\disc_m}B_m$.}\\
From the weak--strong  convergences, we have, for all $\bar w_\kappa \in \mathbb L_\kappa$,
\begin{align*}
\dsp\limsup_{m\to \infty}\dsp\int_0^T\int_\O {\bf D_A}\nabla_{\disc_m}A_m\cdot \nabla_{\disc_m} A_m \ud \x \ud t
\leq \dsp\int_0^T\int_\O F(\bar A,\bar B) (\bar A-\bar w_\kappa) \ud \x \ud t\\
+\dsp\int_0^T\int_\O {\bf D_A}\nabla\bar A\cdot \nabla\bar w_\kappa \ud \x \ud t
-\dsp\int_0^T\int_\O\partial_t \bar A(\bar A-\bar w_\kappa)\ud \x \ud t. 
\end{align*}
Thanks to the density results, for any $\varphi\in \mathbb K$, we can find $(\bar w_\kappa)_{\kappa>0}$ that converges to $\varphi$ in $L^2(0,T;H^1(\O))$, as $\kappa\to 0$. Therefore, we infer, for all $\varphi\in \mathbb K$,
\begin{align*}
\dsp\limsup_{m\to \infty}\dsp\int_0^T\int_\O {\bf D_A}\nabla_{\disc_m}A_m\cdot \nabla_{\disc_m} A_m \ud \x \ud t
\leq \dsp\int_0^T\int_\O F(\bar A,\bar B) (\bar A-\varphi) \ud \x \ud t\\
+\dsp\int_0^T\int_\O {\bf D_A}\nabla\bar A\cdot \nabla \varphi \ud \x \ud t
-\dsp\int_0^T\int_\O\partial_t \bar A(\bar A-\varphi)\ud \x \ud t.
\end{align*}
With taking $\varphi=\bar A$, the above relation yields
\begin{equation*}
\limsup_{m\to \infty}\dsp\int_0^T\int_\O {\bf D_A}\nabla_{\disc_m}A_m\cdot \nabla_{\disc_m} A_m \ud \x \ud t 
\leq\dsp\int_0^T\int_\O {\bf D_A}\nabla\bar A\cdot \nabla\bar A \ud \x \ud t. 
\end{equation*}
Together with \eqref{ch5-liminf}, we obtain 
\begin{equation*}\label{ch5-eq-lim}
\lim_{m\to \infty}\dsp\int_0^T\int_\O {\bf D_A}\nabla_{\disc_m}A_m\cdot \nabla_{\disc_m} A_m \ud \x \ud t 
=\dsp\int_0^T\int_\O {\bf D_A}\nabla\bar A\cdot \nabla\bar A \ud \x \ud t,
\end{equation*}
which implies 
\begin{align*}
0&\leq d_1\limsup_{m\to \infty}\dsp\int_0^T\int_\O|\nabla\bar A-\nabla_{\disc_m} A_m|^2 \ud \x \ud t \\
&\leq \limsup_{m\to \infty}\bigg[ \dsp\int_0^T\int_\O {\bf D_A}\nabla\bar A \cdot \nabla\bar A + \dsp\int_0^T\int_\O {\bf D_A}\nabla_{\disc_m}A_m \cdot \nabla_{\disc_m}A_m \ud \x \ud t \\
&-2\dsp\int_0^T\int_\O {\bf D_A}\nabla\bar A \cdot \nabla_{\disc_m}A_m \ud \x \ud t \bigg]\\
&=0,
\end{align*}
which shows that $\nabla_{\disc_m}A_m \to \nabla\bar A$ strongly in $L^2(\O\times(0,T))^d$. To show the strong convergence of $\nabla_{\disc_m}B_m$, we begin with writing
\begin{equation}\label{es-eq}
\begin{aligned}
\dsp\int_0^T\dsp\int_\O (\nabla_{\disc_m}B_m(\x,t)&- \nabla\bar B(\x,t))\cdot(\nabla_{\disc_m}B_m(\x,t)- \nabla\bar B(\x,t)) \ud \x \ud t\\
&=\dsp\int_0^T\dsp\int_\O \nabla_{\disc_m}B_m(\x,t) \cdot \nabla_{\disc_m}B_m(\x,t) \ud \x \ud t\\
&\quad-\dsp\int_0^T\dsp\int_\O \nabla_{\disc_m}B_m(\x,t) \cdot \nabla\bar B(\x,t) \ud \x \ud t\\
&\quad-\dsp\int_0^T\dsp\int_\O \nabla\bar B(\x,t) \cdot (\nabla_{\disc_m}B_m(\x,t)-\nabla\bar B(\x,t)) \ud \x \ud t.
\end{aligned}
\end{equation}
With setting $\psi:=B_m$ in \eqref{gs-pvi-obs2} and $\psi = \bar B$ in \eqref{pvi-obs-w2}, and taking ${\bf D_B}(\x)={\bf Id}$, passing to the limit superior yields 
\begin{equation*}
\begin{aligned}
\dsp\limsup_{m\to \infty}\dsp\int_0^T\dsp\int_\O \nabla_{\disc_m}&B_m(\x,t)\cdot \nabla_{\disc_m}B_m(\x,t) \ud x \ud t\\
&=\dsp\int_0^T\dsp\int_\O G(\bar A,\bar B)\bar B(\x,t) \ud \x \ud t
-\dsp\int_0^T\dsp\int_\O \partial_t \bar B(\x,t) \bar B(\x,t) \ud \x \ud t
\\
&=\dsp\int_0^T\dsp\int_\O \nabla\bar B(\x,t)\cdot \nabla\bar B(\x,t)\ud \x \ud t.
\end{aligned}
\end{equation*}
Pass to the limit in \eqref{es-eq} and use the above inequality to reach the desired convergence.
\end{proof}

\section{An example of schemes covered by the analysis}\label{sec-ex}
Many numerical schemes fit in the analysis provided in this work. We construct here the non conforming $\mathbb P 1$ finite element scheme for our model. Let $\polyd = (\mesh,\edges)$ be polytopal mesh of $\O$ defined in \cite[Definition 7.2]{30}, in which $\mesh$ and $\edges$ consists of cells $K$ and edges $\edge$, respectively. The elements of gradient discretisation $\disc$ associated with the non conforming $\mathbb P1$ finite element scheme are: 
\begin{itemize}
\item 
$
X_{\disc,0}=\{ w=(w_\edge)_{\edge\in\edges}\;:\; w_\edge \in \RR \mbox{ and } w_\edge=0 \mbox{ for all } \edge \in \edgesext   \}.
$
\item For all $w\in X_{\disc,0}$ and for all $K\in \mesh$, for a.e. $\x\in K$,
\[
\Pi_\disc w(\x) =\dsp\sum_{\edge\in\edges_K}w_\edge e_K^\edge(\x),
\]
where $e_K^\edge$ is a basis function. 
\item For all $w\in X_{\disc,0}$ and for all $K\in \mesh$, for a.e. $\x\in K$,
\[
(\nabla_\disc w)_{|K}=\nabla[(\Pi_\disc w)_{|K}]=\dsp\sum_{\edge\in\edges_K}w_\edge \nabla e_K^\edge
\]

\item The approximate obstacle $\chi_\disc$ is defined by
\[
\chi_\disc:=\fint_\edge \chi(\x) \ud \x.
\]
\item For all $\omega\in W^{2,\infty}(\O)$, we can construct the interpolants $I_\disc\omega=J_\disc\omega=(z_\edge)_{\edge\in\edges}$ with $z_\edge=\omega(\centeredge)$.
\end{itemize}

Substituting these elements in Scheme \eqref{gs-pvi} yields the non conforming $\mathbb P1$ finite element scheme for Problem \eqref{pvi-obs-weak} and its convergence is therefor obtained from Theorem \ref{theorem-conver-rm}. \cite{new-52} shows that $\disc$ given here satisfies the three properties, coercivity, limit conformity and compactness. Let us discuss the consistency property in the sense of Definition \ref{def:cons-rm}. It is shown in \cite{new-52}, that $\widetilde S_{\disc_m}(\psi) \to 0$, for all $\psi \in H_0^1(\O)$, which verifies the second item. Similarly, we can prove that $S_{\disc_m}(\varphi) \to 0$ for all $\varphi \in C^2(\overline\O)\cap\cK$. For any $\varphi$, let $\omega=(\omega_\edge)_{\edge\in\edges} \in X_{\disc,0}$ be the interpolant such that $\omega_\edge=\varphi(\centeredge)$, for all $\edge\in\edges$. We clearly deduce $\Pi_\disc \varphi \leq \chi_\disc$ in $\O$. By the density results established in \cite{Density-ref}, we see that the first item is fulfilled.
 
Let $\varphi_m=(\omega_\edge)_{\edge\in\edges_{m}}\in \cK_{\disc_m}$ and $\psi_m=(w_\edge)_{\edge\in\edges_{m}}\in X_{\disc_m,0}$ be the interpolants such that $\varphi_m=I_{\disc_m}A_{\rm ini}$ and $\psi_m=J_{\disc_m}B_{\rm ini}$. \cite[Estimate (B.11), in Lemma B.7]{30} with $p=2$ shows that there exists $C_5,\;C_6 >0$ not depending on $m$ such that, 
\[
\| \bar A_{\rm ini}- \Pi_\disc I_{\disc_m} A_{\rm ini} \|_{L^2(\O)}^2 \leq C_5^2 h_{\mesh_m}^2 \| \nabla A_{\rm ini}\|_{L^2(\O)}^2, \mbox{ and}
\]
\[
\| \bar B_{\rm ini}- \Pi_\disc J_{\disc_m}B_{\rm ini} \|_{L^2(\O)}^2 \leq C_6^2 h_{\mesh_m}^2 \| \nabla B_{\rm ini}\|_{L^2(\O)}^2.
\]
Pass to the limit to see the right hand sides tend to $0$ (thanks to the classical regularity Hypothesis on $A_{\ini}$ and $B_{\ini}$), and therefore the third and fourth items of the consistency property are verified. Finally, it is shown in \cite[Proof of Theorem 12.12]{30} that, for $\varphi\in W^{1,p}(\O)$, we can construct a function $\omega_m=(\omega_\edge)_{\edge\in\edges_{m}}\in \cK_{\disc_m}$, such that there is $C_7 >0$ not depending on $m$ such that
\[
\| \nabla_{\disc_m} \omega_m \|_{L^p(\O)^d} \leq C_7 \| \nabla \varphi \|_{L^p(\O)^d}.
\]
Apply this estimate (with $p=2$) to $\varphi=A_{\rm ini}$ and $\omega_m=I_{\disc_m}A_{\rm ini}$ to deduce $\| \nabla_{\disc_m}I_{\disc_m}A_{\rm ini} \|_{L^2(\O)^d}$ is bounded.

The non conforming $\mathbb P1$ finite element method for the problem \eqref{pvi-obs} is, for all $n=0,...,N-1$, the following holds:
\[
\begin{aligned}
\Big(\frac{|\edge|}{\delta t^{(n+\frac{1}{2})}}( A_\edge^{(n+1)}&-A_\edge^{(n)})
+\sum_{\sigma \in \edges_K}|\edge|A_\edge^{(n+1)}{\bf n}_{K,\edge}- |K|F(A_\edge^{(n+1)},B_\edge^{(n+1)})\Big)(A_\edge^{(n+1)}-\chi_\edge)=0,\\
&\mbox{ for all $K\in\mesh$ and for all $\edge \in \edges_K$},
\end{aligned}
\]
\[
\frac{|\edge|}{\delta t^{(n+\frac{1}{2})}}( A_\edge^{(n+1)}-A_\edge^{(n)} )+\sum_{\sigma \in \edges_K}|\edge|A_\edge^{(n+1)}{\bf n}_{K,\edge}\geq |\edge|F(A_\edge^{(n+1)},B_\edge^{(n+1)})\quad \mbox{ for all } \edge \in \edges,
\]
\[
A_\edge^{(n+1)} \geq \chi_\edge \quad \mbox{ for all } \edge \in \edges,
\]
\[
\begin{aligned}
\frac{|\edge|}{\delta t^{(n+\frac{1}{2})}}( B_\edge^{(n+1)}-B_\edge^{(n)})&+\sum_{\sigma \in \edges_K}|\edge|B_\edge^{(n+1)}{\bf n}_{K,\edge}= |\edge|G(A_\edge^{(n+1)},B_\edge^{(n+1)}),\\
&\mbox{ for all $K\in\mesh$ and for all $\edge \in \edges_K$},
\end{aligned}
\]
\[
A_\sigma^{(n+1)}=B_\sigma^{(n+1)}=0 \quad \mbox{ for all } \sigma \in \edges_{\rm ext},
\]
\[
(A^{(0)},B^{(0)})=\left(A_{\rm ini}(x_\edge,0),B_{\rm ini}(x_\edge,0)\right)\quad \mbox{ for all } \edge \in \edges.
\]



\bibliographystyle{siam}
\bibliography{pvi-ref}

\end{document}
